\documentclass[12pt]{article}
\usepackage{amsmath,amsthm,amsfonts,amssymb}
\usepackage[pdftex,pdfborder={0 0 0}]{hyperref}
\usepackage{fullpage}
\usepackage{multirow}
\usepackage{array}
\usepackage{bbm}
\usepackage[noblocks]{authblk}
\usepackage{verbatim}
\usepackage{tikz}
\usepackage{float}
\usepackage{enumerate}
\usepackage{diagbox}
\usepackage{comment}
\usetikzlibrary{arrows}

\newtheorem{theorem}{Theorem}[section]
\newtheorem{lemma}[theorem]{Lemma}

\newtheorem{corollary}[theorem]{Corollary}
\newtheorem{conjecture}[theorem]{Conjecture}

\theoremstyle{definition}
\newtheorem{definition}[theorem]{Definition}

\newtheorem{question}[theorem]{Question}

\newlength{\Oldarrayrulewidth}

\newcommand{\N}{\mathbb{N}}

\renewcommand{\mod}[2]{\equiv#1\textup{ (mod }#2\textup{)}}

\def\m@th{\mathsurround=0pt}
\def\sm#1{\null\,\vcenter{\baselineskip9pt\lineskip.23ex\m@th
    \ialign{\hfil$\scriptstyle##$\hfil&&\ \hfil$\scriptstyle##$\hfil\crcr
    \mathstrut\crcr\noalign{\kern-\baselineskip}
    #1\crcr\mathstrut\crcr\noalign{\kern-\baselineskip}}}\,}
\def\smnp#1{\null\,\vcenter{\baselineskip9pt\lineskip.23ex\m@th
    \ialign{\hfil$\scriptstyle##$\hfil&&\ \ \hfil$\scriptstyle##$\hfil\crcr
    \mathstrut\crcr\noalign{\kern-\baselineskip}
    #1\crcr\mathstrut\crcr\noalign{\kern-\baselineskip}}}\,}

\addtolength{\textheight}{0in}

\begin{document}

\title{On binomial coefficients associated with Sierpi\'{n}ski and Riesel numbers}

\author[1]{Ashley~Armbruster\thanks{aarmbruster0@frostburg.edu}}

\author[2]{Grace~Barger\thanks{grace.barger@salem.edu}}

\author[3]{Sofya~Bykova\thanks{sab444@cornell.edu}}

\author[4]{Tyler~Dvorachek\thanks{dvortj19@uwgb.edu}}

\author[5]{Emily~Eckard\thanks{e.m.eckard@email.msmary.edu}}

\author[6]{Joshua~Harrington\thanks{joshua.harrington@cedarcrest.edu}}

\author[7]{Yewen~Sun\thanks{yewen@ucsb.edu}}

\author[8]{Tony~W.~H.~Wong\thanks{wong@kutztown.edu}}

\affil[1]{Department of Mathematics, Frostburg State University}
\affil[2]{Department of Mathematics, Salem College}
\affil[3]{Department of Mathematics, Cornell University}
\affil[4]{Department of Mathematics, University of Wisconsin-Green Bay}
\affil[5]{Department of Mathematics, Mount St.\ Mary's University}
\affil[6]{Department of Mathematics, Cedar Crest College}
\affil[7]{Department of Mathematics, University of California, Santa Barbara}
\affil[8]{Department of Mathematics, Kutztown University of Pennsylvania}
\date{\today}

\maketitle

\begin{abstract}
In this paper, we investigate the existence of Sierpi\'{n}ski numbers and Riesel numbers as binomial coefficients. We show that for any odd positive integer $r$, there exist infinitely many Sierpi\'{n}ski numbers and Riesel numbers of the form $\binom{k}{r}$.  Let $S(x)$ be the number of positive integers $r$ satisfying $1\leq r\leq x$ for which $\binom{k}{r}$ is a Sierpi\'{n}ski number for infinitely many $k$.  We further show that the value $S(x)/x$ gets arbitrarily close to 1 as $x$ tends to infinity.  Generalizations to base $a$-Sierpi\'{n}ski numbers and base $a$-Riesel numbers are also considered. In particular, we prove that there exist infinitely many positive integers $r$ such that $\binom{k}{r}$ is simultaneously a base $a$-Sierpi\'{n}ski and base $a$-Riesel number for infinitely many $k$.\\
\textit{MSC:} 11A07, 11B65.\\
\textit{Keywords:} Sierpi\'{n}ski, Riesel, binomial coefficients.
\end{abstract}

\section{Introduction}
In 1956, Riesel showed that if $k\equiv 509203\pmod{1184810}$, then for any natural number $n$, the value $k\cdot 2^n-1$ is composite \cite{riesel}.  Today we say that $k$ is a Riesel number if $k$ is an odd positive integer such that $k\cdot 2^n-1$ is composite for all natural numbers $n$.  Using methods similar to Riesel, Sierpi\'{n}ski showed in 1960 that there are infinitely many odd positive integers $k$ such that $k\cdot 2^n+1$ is composite for all natural numbers $n$ \cite{sierpinski}; values of $k$ satisfying this property are now known as Sierpi\'{n}ski numbers.  

In 2003, Chen showed that if $r\not\equiv 0,4,6,8\pmod{12}$, then there exist infinitely many odd positive integers $k$ such that $k^r$ is a Sierpi\'{n}ski number \cite{chen}.  Chen's result was later extended by Filaseta, Finch, and Kozek for all positive integers $r$ \cite{ffk}.  In their article, Filaseta, Finch, and Kozek asked the following question.

\begin{question}\label{question:ffk}
Let $f\in\mathbb{Z}[x]$.  Does there exist an integer $k$ such that $f(k)$ is a Sierpi\'{n}ski number?
\end{question}

This question has been studied by various authors.  For example, Finch, Harrington, and Jones studied this question for $f(x)\in\{x^r+x+c, ax^r+c, x^r+1, x^r+x+1\}$ \cite{fhj} and Emadian, Finch-Smith, and Kallus studied this question for $f(x)=384x^3+432x^2+112x-5$ \cite{efk}.  Other authors considered Question~\ref{question:ffk} for polynomials $f\in\mathbb{Q}[x]$.  Of particular note is the existence of infinitely many Sierpi\'{n}ski numbers in the sequence of triangular numbers and other polygonal numbers.  Recall that for $s\geq 3$, the $x$-th $s$-gonal number is given by 
$$P_s(x)=\frac{s-2}{2}x^2-\frac{s-4}{2}x.$$
Question~\ref{question:ffk} with respect to $P_s(x)$ has been studied by Baczkowski \emph{et al.} \cite{befks} and Baczkowski and Eitner \cite{be}.

In this article, we study Question~\ref{question:ffk} with respect to the polynomial 
$$\binom{x}{r}=\frac{x(x-1)(x-2)\cdots(x-(r-1))}{r!}$$
where $r$ is a fixed positive integer. Notice that the case $\binom{x}{2}$ has been previously studied since $\binom{x}{2}=P_3(x-1)$.  Of course, $\binom{x}{r}$ is more commonly referred to as the \emph{binomial coefficient} function.  We begin our investigation on the existence of Sierpi\'{n}ski binomial coefficients for general $r$ in Section~\ref{sec:sierpinski}, and extend some of these results to base $a$-Sierpi\'{n}ski and $a$-Riesel binomial coefficients in Section~\ref{sec:generalization}.

\section{Preliminary results, definitions, and notation}
Throughout this article, we use $[a,b]$ to denote the set of integers $x$ such that $a\leq x\leq b$.

For our investigation, we will make use of the following concept, originally introduced by Erd\H{o}s.

\begin{definition}
A \emph{covering system} of the integers is a finite collection of congruences such that every integer satisfies at least one congruence from the set.
\end{definition}
In this article, we will primarily use covering systems of the form:
\begin{equation}\label{eq:covering}
    \begin{split}
    0\pmod{2^\tau}\qquad&\text{where }\tau\text{ is a positive integer}\\
    2^{\ell-1}\pmod{2^{\ell}}\qquad&\text{ for each }1\leq\ell\leq \tau.
\end{split}
\end{equation}

Many of the proofs in this article rely heavily on the following two theorems, originally due to Zsigmondy \cite{zsigmondy} and Lucas \cite{lucas}, respectively.

\begin{theorem}[Zsigmondy's Theorem]\label{thm:bang}
Let $a$ and $b$ be relatively prime positive integers with $a>b$.  Then for any integer $n\geq 2$, there exists a prime $p$ such that $p$ divides $a^n-b^n$ and $p$ does not divide $a^{\widetilde{n}}-b^{\widetilde{n}}$ for any $\widetilde{n}<n$, with the exceptions
\begin{itemize}
    \item $(a,b)=(2,1)$ and $n=6$; and
    \item $a+b$ is a power of $2$ and $n=2$.
\end{itemize}
\end{theorem}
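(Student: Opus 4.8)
The plan is to prove Zsigmondy's theorem through the arithmetic of the homogenized cyclotomic polynomials $\Phi_d(a,b)=b^{\varphi(d)}\Phi_d(a/b)=\prod_{\gcd(k,d)=1}(a-\zeta_d^k b)$, where $\zeta_d$ is a primitive $d$-th root of unity. First I would record the factorization $a^n-b^n=\prod_{d\mid n}\Phi_d(a,b)$ and note that each $\Phi_d(a,b)$ is a positive integer when $a>b\ge 1$. Calling a prime $p$ a \emph{primitive prime divisor} of $a^n-b^n$ when $p\mid a^n-b^n$ but $p\nmid a^m-b^m$ for all $1\le m<n$, the theorem is exactly the assertion that such a prime exists outside the two listed exceptions. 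The first reduction is to observe that $\gcd(a,b)=1$ forces $p\nmid ab$ for any such $p$, so $a/b$ is well defined modulo $p$, and $p$ is a primitive prime divisor precisely when $\ord_p(a/b)=n$.

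The heart of the argument is a classification of the primes dividing $\Phi_n(a,b)$. I would show that if $p\mid\Phi_n(a,b)$ and $d:=\ord_p(a/b)$, then $d\mid n$ and either (i) $d=n$, in which case $p$ is a genuine primitive prime divisor, or (ii) $n=d\cdot p^{j}$ for some $j\ge 1$, in which case $p$ is the largest prime factor of $n$. A lifting-the-exponent computation then shows that in case (ii) the prime $p$ divides $\Phi_n(a,b)$ to the first power only. Consequently, $a^n-b^n$ has \emph{no} primitive prime divisor if and only if $\Phi_n(a,b)$ contains no prime from case (i), which can happen only when $\Phi_n(a,b)$ equals $1$ or a single copy of the largest prime $p$ dividing $n$; in particular this requires $\Phi_n(a,b)\le p\le n$.

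It then remains to bound $\Phi_n(a,b)$ from below. Using $|a-\zeta b|\ge a-b\ge 1$ for every root $\zeta$ on the unit circle, together with the sharper estimate obtained by pairing complex-conjugate roots, I would derive an inequality of the shape $\Phi_n(a,b)\ge(a-b)^{\varphi(n)}\cdot(\text{growth factor})$ that forces $\Phi_n(a,b)>n$ for all $(a,b,n)$ with $n$, or with $a-b$, beyond an explicit bound. This collapses the problem to finitely many small triples $(a,b,n)$.

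Finally I would dispatch those finitely many cases by direct computation, checking that the only triples for which the obstruction $\Phi_n(a,b)\le p$ actually survives are $(a,b,n)=(2,1,6)$ --- where $2^6-1=63=3^2\cdot 7$ with both $3$ and $7$ already dividing $2^2-1$ and $2^3-1$ --- and the family $n=2$ with $a+b$ a power of $2$ --- where $a^2-b^2=(a-b)(a+b)$, the factor $a+b$ contributes only the prime $2$, and $2\mid a-b$ as well since coprimality makes $a$ and $b$ both odd. I expect the main obstacle to be the lower bound for $\Phi_n(a,b)$ in the previous paragraph: the estimate must be sharp enough to leave only a genuinely finite and checkable list of exceptions while remaining compatible with the case-(ii) prime, and it is this quantitative point, rather than the algebraic structure, that separates a clean proof from a messy one.
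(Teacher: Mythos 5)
The paper does not prove this statement at all: Theorem~\ref{thm:bang} is quoted as a classical result with a citation to Zsigmondy's 1892 paper, and it is used later purely as a black box. So there is no in-paper argument to compare against, and your proposal has to stand on its own. Its skeleton is the standard Birkhoff--Vandiver/cyclotomic route, and that skeleton is sound: the factorization $a^n-b^n=\prod_{d\mid n}\Phi_d(a,b)$, the dichotomy that a prime $p\mid\Phi_n(a,b)$ with $d=\ord_p(a/b)$ satisfies either $d=n$ or $n=dp^{j}$ with $p$ the largest prime factor of $n$, and the conclusion that absence of primitive prime divisors forces $\Phi_n(a,b)\le p\le n$ --- all of this is correct and is how the theorem is usually proved.

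There are, however, two genuine gaps. First, the decisive quantitative step is missing, and the route you indicate for it would fail. Starting from $|a-\zeta b|\ge a-b$ and ``pairing complex-conjugate roots'' does not force $\Phi_n(a,b)>n$ in the hardest case $a-b=1$: there $(a-b)^{\varphi(n)}=1$, and a conjugate pair at angle $\theta$ contributes $(a-b)^2+2ab(1-\cos\theta)$, which for the primitive roots near $\zeta=1$ is only $1+O(ab/n^2)$; to win you must know that a positive proportion of the $\varphi(n)$ primitive roots have angle bounded away from $0$, and counting primitive roots in an arc carries M\"obius-type error terms of size $2^{\omega(n)}$ that must be beaten by $\varphi(n)$. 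The cleaner standard fix avoids geometry altogether: write $\Phi_n(a,b)=\prod_{d\mid n}\left(a^{n/d}-b^{n/d}\right)^{\mu(d)}$ and use $(a-b)\,a^{k-1}\le a^k-b^k<a^k$ to get a lower bound of the shape $a^{\varphi(n)-2^{\omega(n)}}$, which does reduce the problem to an explicit finite check. Without one of these arguments spelled out, the proposal is a plan rather than a proof --- as you yourself acknowledge. Second, a smaller but real error: the lifting-the-exponent claim that in case (ii) the prime $p$ divides $\Phi_n(a,b)$ only to the first power is false when $p=2$ and $n=2$, since $\Phi_2(a,b)=a+b$ can be divisible by an arbitrarily high power of $2$ (indeed this failure is precisely the source of the exception ``$a+b$ a power of $2$, $n=2$''). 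The claim must be stated for $n\ge 3$, or for $p$ odd, with $n=2$ split off before the LTE step rather than only at the final case check.
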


\begin{theorem}[Lucas' Theorem]\label{thm:lucas}
Let $p$ be a prime, and let $m$ and $n$ be nonnegative integers. Let the base $p$ representations of $m$ and $n$ be $m=\sum_{i=0}^jm_ip^i$ and $n=\sum_{i=0}^jn_ip^i$, respectively, where $m_i,n_i\in[0,p-1]$ for all $i\in[0,j]$. Then
$$\binom{m}{n}\mod{\prod_{i=0}^j\binom{m_i}{n_i}}{p}.$$
\end{theorem}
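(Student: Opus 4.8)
The plan is to prove this classical congruence by working in the polynomial ring $\F_p[x]$ and comparing coefficients of a suitable power of $1+x$. The central observation is the ``freshman's dream'': since $p$ divides $\binom{p}{t}$ for every $1\leq t\leq p-1$, we have $(1+x)^p\mod{1+x^p}{p}$ in $\F_p[x]$. Iterating this (equivalently, applying the Frobenius endomorphism repeatedly) yields $(1+x)^{p^i}\mod{1+x^{p^i}}{p}$ for every $i\geq 0$.

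With this in hand, I would write $m=\sum_{i=0}^j m_ip^i$ and compute, coefficientwise in $\F_p[x]$,
\begin{equation*}
(1+x)^m=\prod_{i=0}^j\left((1+x)^{p^i}\right)^{m_i}\equiv\prod_{i=0}^j\left(1+x^{p^i}\right)^{m_i}=\prod_{i=0}^j\sum_{t=0}^{m_i}\binom{m_i}{t}x^{tp^i}\pmod{p}.
\end{equation*}

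The key step is then to extract the coefficient of $x^n$ from both sides. On the left, $(1+x)^m=\sum_n\binom{m}{n}x^n$, so the coefficient of $x^n$ is $\binom{m}{n}$. On the right, a monomial $x^n$ arises by choosing an exponent $t=n_i$ from the $i$-th factor for each $i$, contributing $x^{\sum_i n_ip^i}$; since each such $n_i$ satisfies $0\leq n_i\leq p-1$, the representation $n=\sum_i n_ip^i$ is exactly the base $p$ expansion of $n$ and is therefore unique. Hence the only contribution to $x^n$ is $\prod_{i=0}^j\binom{m_i}{n_i}$, adopting the convention $\binom{m_i}{n_i}=0$ whenever $n_i>m_i$. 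Equating the two coefficients modulo $p$ gives the claimed congruence.

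I do not anticipate a genuine obstacle, as the argument is elementary. The only point requiring care is ensuring that the coefficient extraction on the right-hand side is unambiguous, which is precisely guaranteed by the uniqueness of base $p$ representations together with the bound $n_i\leq p-1$; without this bound the same power of $x$ could be produced in several ways and the product $\prod_i\binom{m_i}{n_i}$ would not be well defined.
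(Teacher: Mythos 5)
Your proof is correct and complete. Note, however, that the paper itself contains no proof of this statement to compare against: Lucas' Theorem is quoted there as a classical result with a citation to Lucas' 1878 paper, and is then used as a black box throughout Section 3. Your argument is the standard modern generating-function proof: reduce $(1+x)^{p^i}$ to $1+x^{p^i}$ in $\F_p[x]$ via the fact that $p\mid\binom{p}{t}$ for $1\leq t\leq p-1$ (iterated Frobenius), expand $(1+x)^m$ digit by digit, and extract the coefficient of $x^n$. You correctly identify the one delicate point, namely that the digit bound $n_i\leq p-1$ together with uniqueness of base-$p$ representations guarantees that exactly one choice of exponents $t_i=n_i$ contributes to $x^n$; and the convention $\binom{m_i}{n_i}=0$ when $n_i>m_i$ is what makes the congruence hold even when $n>m$ (in that case some digit of $n$ must exceed the corresponding digit of $m$, so both sides vanish modulo $p$). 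Nothing further is needed.
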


\section{Sierpi\'{n}ski binomial coefficients}\label{sec:sierpinski}

\begin{lemma}\label{lem:1}
Let $p$ be a prime, and let $r$ be a nonnegative integer. Let $j$ be the smallest nonnegative integer such that $r<p^{j+1}$. Then for all positive integers $k$ such that $k\mod{r}{p^{j+1}}$, we have
$$\binom{k}{r}\mod{1}{p}.$$
\end{lemma}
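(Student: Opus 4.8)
The plan is to invoke Lucas' Theorem (Theorem~\ref{thm:lucas}) directly, after recording what the hypotheses say about base-$p$ digits. Since $j$ is the smallest nonnegative integer with $r<p^{j+1}$, the base-$p$ representation of $r$ fits in the digits indexed $0$ through $j$; write $r=\sum_{i=0}^j r_ip^i$ with $r_i\in[0,p-1]$. If $r=0$ the claim is immediate because $\binom{k}{0}=1$, so I assume $r\geq 1$.

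Next I would translate the congruence. The condition $k\mod{r}{p^{j+1}}$ says $p^{j+1}\mid(k-r)$, equivalently that the $j+1$ least significant base-$p$ digits of $k$ coincide with those of $r$. Writing $k=\sum_{i=0}^t k_ip^i$ in base $p$ for some $t\geq j$, this means $k_i=r_i$ for all $i\in[0,j]$. I would also note that every positive integer $k$ with $k\mod{r}{p^{j+1}}$ satisfies $k\geq r$, since $0<r<p^{j+1}$ forces the least such positive $k$ to be $r$ itself; hence $\binom{k}{r}$ is a genuine binomial coefficient.

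Then I would pad $r$ with zero digits to match the length of $k$, setting $r_i=0$ for $j<i\leq t$, and apply Theorem~\ref{thm:lucas}:
$$\binom{k}{r}\equiv\prod_{i=0}^t\binom{k_i}{r_i}\pmod p.$$
Each factor equals $1$: for $i\in[0,j]$ we have $\binom{k_i}{r_i}=\binom{r_i}{r_i}=1$, and for $j<i\leq t$ we have $\binom{k_i}{r_i}=\binom{k_i}{0}=1$. Hence the product is $1$, which gives $\binom{k}{r}\mod{1}{p}$, as desired.

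There is no serious obstacle here; the result is essentially a one-line consequence of Lucas' Theorem once the digits are lined up. The only points requiring a little care are the bookkeeping of digit indices (ensuring $r$ and $k$ are expanded to the same number of base-$p$ digits so that the hypothesis of Theorem~\ref{thm:lucas} is literally met) and the edge case $r=0$, both of which are routine.
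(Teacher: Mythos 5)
Your proof is correct and follows exactly the same route as the paper's: apply Lucas' Theorem after observing that the congruence $k\equiv r\pmod{p^{j+1}}$ forces the low-order base-$p$ digits of $k$ to equal those of $r$, pad $r$ with zero digits, and note every factor $\binom{k_i}{r_i}$ equals $1$. Your extra remarks (the $r=0$ case and the observation that $k\geq r$) are harmless additions the paper leaves implicit.
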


\begin{proof}
Let the base $p$ representations of $r$ and $k$ be $r=\sum_{i=0}^{j'}r_ip^i$ and $k=\sum_{i=0}^{j'}k_ip^i$, respectively, where $j\leq j'$, $k_i=r_i\in[0,p-1]$ for all $i\in[0,j]$, $r_i=0$ for all $i\in[j+1,j']$, and $k_i\in[0,p-1]$ for all $i\in[j+1,j']$. By Theorem~\ref{thm:lucas},
$$\binom{k}{r}\equiv\left(\prod_{i=0}^{j}\binom{k_i}{r_i}\right)\left(\prod_{i=j+1}^{j'}\binom{k_i}{r_i}\right)\equiv\left(\prod_{i=0}^{j}\binom{r_i}{r_i}\right)\left(\prod_{i=j+1}^{j'}\binom{k_i}{0}\right)\mod{1}{p}.$$
\end{proof}

The following three lemmas are verified computationally by Mathematica. The code for these lemmas is included in Appendix~\ref{appendixA}, Appendix~\ref{appendixB}, and Appendix~\ref{appendixC}, respectively.

\begin{lemma}\label{lem:G}
Let $p=641$, and let
\begin{align*}\mathcal{G}&=\{\gamma\in[1,p-1]:\gamma\text{ is odd}\}\\
&\hspace{20pt}\cup\{2,6,8,10,12,22,24,30,32,34,44,46,48,52,56,66,70,74,80,84,86,94,100,102,\\
&\hspace{40pt}104,110,118,120,134,136,140,144,146,160,162,174,176,182,184,190,194,\\
&\hspace{40pt}198,200,202,208,222,224,236,248,250,252,260,270,292,294,304,312,318,\\
&\hspace{40pt}334,336,338,348,366,368,374,402,414,424,426,454,474,530,546,552,578\}.
\end{align*}
Then there exists a function $\kappa:\mathcal{G}\to[0,p-1]$ such that for every $r\in\mathcal{G}$,
$$\binom{\kappa(r)}{r}\mod{-1}{p}.$$
\end{lemma}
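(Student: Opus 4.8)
The plan is to split $\mathcal{G}$ into its odd part and its even part, dispatching the former by an explicit formula and the latter by a finite computation.

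For the odd elements of $\mathcal{G}$, namely every odd $\gamma\in[1,p-1]$, I would take $\kappa(r)=p-1$ uniformly. Since both $p-1$ and $r$ lie in $[0,p-1]$, each is a single base-$p$ digit, so Theorem~\ref{thm:lucas} reduces $\binom{p-1}{r}$ modulo $p$ to the ordinary binomial coefficient. Expanding,
$$\binom{p-1}{r}=\frac{(p-1)(p-2)\cdots(p-r)}{r!}\equiv\frac{(-1)(-2)\cdots(-r)}{r!}=(-1)^r\pmod p,$$
and since $r$ is odd this equals $-1\pmod p$, exactly as required. This settles every odd $r\in\mathcal{G}$ simultaneously, with no case analysis.

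For the even elements listed explicitly in $\mathcal{G}$, no such uniform identity is available, so I would fall back on a direct finite search. For each even $r$ in the list I would compute $\binom{k}{r}\bmod p$ as $k$ ranges over $[0,p-1]$ — again each value is a genuine residue modulo $p$ by Theorem~\ref{thm:lucas}, with $\binom{k}{r}\equiv 0$ whenever $k<r$ — and record any $k$ attaining the value $p-1$; setting $\kappa(r)$ equal to such a $k$ completes the construction. Because $p=641$ is fixed and the even list is finite, this is a bounded search that terminates, and it is precisely what the appended Mathematica code carries out.

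The only genuine obstacle lies in the even case: there is no evident structural reason forcing $-1$ into the image of the degree-$r$ polynomial $x\mapsto\binom{x}{r}$ over $\F_p$, so the existence of $\kappa(r)$ must be \emph{verified} rather than \emph{argued}. (Indeed, the even values appearing in $\mathcal{G}$ are presumably exactly those even $r\in[1,p-1]$ for which $-1$ is attained, the significance of the list being that these are the solvable residues.) Thus the substantive content of the lemma is the explicit check for the listed even $r$, whereas the odd case is an immediate consequence of the classical congruence $\binom{p-1}{r}\equiv(-1)^r\pmod p$.
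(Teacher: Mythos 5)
Your proposal is correct, but it takes a partly different route from the paper. The paper treats Lemma~\ref{lem:G} as a purely computational fact: the Mathematica code in Appendix~\ref{appendixA} searches, for every residue $r\in[0,p-1]$, for some $k\in[1,p-1]$ with $\binom{k}{r}\equiv-1\pmod{p}$, and $\mathcal{G}$ is precisely the set of residues for which the search succeeds; no distinction between odd and even $r$ is ever drawn. You instead split $\mathcal{G}$: for every odd $r$ you exhibit the uniform witness $\kappa(r)=p-1$ via the classical congruence $\binom{p-1}{r}\equiv(-1)^r\pmod{p}$, and you fall back on finite search only for the $74$ listed even values. Your approach buys structural insight the paper's proof lacks---it explains \emph{why} all odd residues lie in $\mathcal{G}$, rather than merely observing that they do---and it shrinks the unavoidable computation to the even part of the list; the paper's single brute-force sweep is cruder but requires no theory at all. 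Two minor remarks: invoking Theorem~\ref{thm:lucas} in your odd case is superfluous, since $p-1$ and $r$ are already single base-$p$ digits and the congruence follows directly from $\binom{p-1}{r}=\prod_{i=1}^{r}\frac{p-i}{i}\equiv\prod_{i=1}^{r}\frac{-i}{i}=(-1)^r\pmod{p}$; and your parenthetical guess is right---the paper's code builds $\mathcal{G}$ as exactly the set of solvable residues, so the even entries are by construction those even $r$ for which $-1$ is attained.
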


\begin{lemma}\label{lem:pair}
Let $p=641$. Recall $\mathcal{G}$ defined in Lemma~$\ref{lem:G}$. Then there exist a function $\widetilde{\kappa}=(\widetilde{\kappa}',\widetilde{\kappa}''):[1,515]^2\to[0,p-1]^2$ such that for every ordered pair $ (r',r'')\in[1,515]^2$,
$$\binom{\widetilde{\kappa}'(r',r'')}{r'}\binom{\widetilde{\kappa}''(r',r'')}{r''}\mod{-1}{p}.$$
\end{lemma}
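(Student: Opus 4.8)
The statement is a finite assertion over $\F_p$ with $p=641$, so the plan is to reduce it to a bounded search and organize that search so the claim becomes transparent. For a fixed $r\in[1,515]$, write $V_r=\{\binom{a}{r}\bmod p:a\in[0,p-1]\}\cap\F_p^*$ for the set of nonzero residues attained by the binomial coefficient as $a$ ranges over a complete residue system. Since $\binom{a}{r}$ is, via the polynomial $\frac{a(a-1)\cdots(a-r+1)}{r!}$, a degree-$r$ polynomial over $\F_p$ whose numerator vanishes exactly when $a\in[0,r-1]$ (here $r!$ is invertible because $r<p$), we have $\binom{a}{r}\equiv 0$ for precisely those $r$ values of $a$ and $\binom{a}{r}\in\F_p^*$ for the remaining $p-r$ values. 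The desired conclusion for a pair $(r',r'')$ is then equivalent to the single condition $-1\in V_{r'}\cdot V_{r''}$, i.e.\ the existence of $u\in V_{r'}$ with $-u^{-1}\in V_{r''}$; defining $\widetilde\kappa$ amounts to recording, for each pair, one witnessing $(a,b)$.

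First I would dispatch the large, easy part of the square $[1,515]^2$ using Lemma~\ref{lem:G}. The key observation is that $1\in V_r$ for every $r\in[1,515]$, because $\binom{r}{r}=1$ and $r\in[0,p-1]$ is a legal argument. Hence whenever $r'\in\mathcal{G}$ we may take $\widetilde\kappa'(r',r'')=\kappa(r')$ and $\widetilde\kappa''(r',r'')=r''$, giving $\binom{\kappa(r')}{r'}\binom{r''}{r''}\equiv(-1)\cdot 1\equiv -1\pmod p$; symmetrically if $r''\in\mathcal{G}$. Since $\mathcal{G}$ already contains every odd residue in $[1,p-1]$, the only pairs not settled this way are those in which both $r'$ and $r''$ are even and lie outside $\mathcal{G}$. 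This cuts the problem down to a modest set of \emph{hard} pairs drawn from the even values in $[1,515]$ not belonging to $\mathcal{G}$.

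For the remaining hard pairs I would precompute each value set $V_r$ (just $515$ evaluations of a binomial coefficient over a complete residue system) and then, for every such pair $(r',r'')$, test whether $V_{r'}\cap\big(-V_{r''}^{-1}\big)\neq\emptyset$, where $-V_{r''}^{-1}=\{-v^{-1}:v\in V_{r''}\}$; any element of the intersection yields witnesses $u=\binom{a}{r'}$ and $-u^{-1}=\binom{b}{r''}$, and recording $(a,b)$ defines $\widetilde\kappa$ on that pair. In practice one can simply run this intersection test over all of $[1,515]^2$; the $\mathcal{G}$-reduction is only needed to make the verification conceptually clean and to show the computation is not delicate on the easy part.

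The main obstacle is the regime of large $r$. When $r$ is close to $515$ the set $V_r$ has at most $p-r$ elements — for instance $|V_{514}|\le 127$ — so a pair of two such values gives two subsets of $\F_p^*$ whose sizes sum to well below $p-1=640$. Consequently the trivial pigeonhole bound $|V_{r'}|+|V_{r''}|>p-1$, which (since $|-V_{r''}^{-1}|=|V_{r''}|$) would force the intersection to be nonempty, is unavailable precisely on these pairs, and there is no purely counting-based shortcut. A probabilistic heuristic (for sets of these sizes the chance that $-1\notin V_{r'}V_{r''}$ is astronomically small) makes the conclusion entirely believable, but a rigorous proof must certify the nonempty intersection for every one of these hard pairs by direct computation; the one genuine risk is a single exceptional pair slipping through, so the essential task is to confirm exhaustively that no pair in $[1,515]^2$ fails.
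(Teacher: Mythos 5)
Your approach coincides with the paper's: this lemma is proved purely computationally, and the Mathematica code in Appendix~\ref{appendixB} implements exactly your reduction --- every pair with a coordinate in $\mathcal{G}$ is dispatched by Lemma~\ref{lem:G} together with $\binom{r}{r}=1$, and witnesses are searched exhaustively only over pairs whose coordinates both lie outside $\mathcal{G}$. The single step your write-up leaves open --- certifying that no hard pair in $[1,515]^2$ actually fails --- is precisely what that computation supplies: the code collects all failing ordered pairs (there are $3771$ of them in $[0,640]^2$, including $(0,0)$) and its output \texttt{False} certifies that none lies in $[1,515]^2$; as you correctly observe, no pigeonhole or counting argument can substitute for this finite check, and indeed the bound $515$ in the statement is itself an artifact of the computation rather than something derivable a priori.
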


\begin{lemma}\label{lem:10primes}
Let $\mathcal{P}$ be the following set of primes $p$ that divides $2^{2^{\tau-1}}+1$ for some $\tau\in\N$ such that $\left(2^{2^{\tau-1}}+1\right)/p$ is divisible by another prime distinct from $p$:
{\small$$\{641,114689,274177,319489,974849,2424833,6700417,13631489,26017793,45592577,63766529\}.$$}
Then for every $r\in[1,640]$, there exists $p\in\mathcal{P}$ and $k\in\N$ such that
$$\binom{k}{r}\mod{-1}{p}.$$
\end{lemma}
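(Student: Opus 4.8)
The plan is to collapse the unbounded search over $k$ into a finite check on single base-$p$ digits by way of Lucas' Theorem. The first observation is that every prime in $\mathcal{P}$ exceeds $640$, so for $r\in[1,640]$ the base-$p$ representation of $r$ is the single digit $r_0=r$ (all higher digits vanish). Writing $k_0=k\bmod p$ and applying Theorem~\ref{thm:lucas}, one gets $\binom{k}{r}\equiv\binom{k_0}{r}\pmod p$, since the contribution of the higher digits of $k$ is $\prod_{i\geq1}\binom{k_i}{0}=1$. Consequently the congruence $\binom{k}{r}\equiv-1\pmod p$ is solvable in $k\in\N$ if and only if there is a single residue $k_0\in[r,p-1]$ with $\binom{k_0}{r}\equiv-1\pmod p$, and in that case $k=k_0$ is already a witness. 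This is the step that makes the assertion checkable.

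The odd values of $r$ are handled uniformly: for any prime $p$ one has $\binom{p-1}{r}\equiv(-1)^r\pmod p$, so $k=p-1$ settles every odd $r\in[1,640]$ (with $p=641$, say), matching the first clause in the definition of $\mathcal{G}$ in Lemma~\ref{lem:G}. The remaining work is the even values. Here the strategy is to squeeze as much as possible out of the single prime $p=641$ and to invoke the larger primes only as a fallback. For each even $r$ in the listed portion of $\mathcal{G}$, Lemma~\ref{lem:G} already supplies an explicit residue $\kappa(r)\in[0,640]$ with $\binom{\kappa(r)}{r}\equiv-1\pmod{641}$, and we take $k=\kappa(r)$. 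For the even values of $r\in[1,640]$ lying outside $\mathcal{G}$ — those not resolved by $641$ — we turn to the remaining ten primes of $\mathcal{P}$.

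For those primes the same single-digit reduction applies, so the verification reduces to forming, for each $p\in\mathcal{P}$, the set $V_p=\{\,r\in[1,640]:\binom{k_0}{r}\equiv-1\ \pmod p\ \text{for some }k_0\in[r,p-1]\,\}$ and confirming that $\bigcup_{p\in\mathcal{P}}V_p=[1,640]$. Each $V_p$ is produced by a direct scan over $k_0\in[r,p-1]$, evaluating $\binom{k_0}{r}\bmod p$ (conveniently updated incrementally in $k_0$), which is precisely what the routine in Appendix~\ref{appendixC} performs. Once the union is confirmed to be all of $[1,640]$, assigning to each $r$ a prime $p$ with $r\in V_p$ and the corresponding $k_0$ completes the argument.

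The main obstacle is one of coverage rather than concept: there is no a priori guarantee that $-1$ lies in the image of $k_0\mapsto\binom{k_0}{r}\bmod p$ for a prescribed $r$ and $p$, so the crux is the exhaustive verification that these eleven primes together catch every $r\in[1,640]$, and in particular that each even $r\notin\mathcal{G}$ is captured by at least one larger prime. Since the largest prime in $\mathcal{P}$ is of size about $6.4\times10^{7}$, a naive scan is nontrivial in cost, and the practical design is exactly the layering above — dispatching the bulk of the residues cheaply through $p=641$ and reserving the large primes for the few residual even values — so that the search terminates quickly. Correctness then rests entirely on the single-digit reduction established in the first paragraph together with the recorded computational output.
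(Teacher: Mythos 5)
Your proposal is correct and rests on the same foundation as the paper's own proof: the paper establishes this lemma purely by the finite computation in Appendix~\ref{appendixC}, which for each $r\in[1,640]$ solves $\binom{x}{r}\equiv-1\pmod{p}$ over the residues modulo each $p\in\mathcal{P}$ --- exactly the finite search whose soundness and completeness you justify via the single-digit case of Lucas' Theorem. Your additional layering (dispatching odd $r$ through $\binom{p-1}{r}\equiv(-1)^r\pmod{p}$ and even $r\in\mathcal{G}$ through Lemma~\ref{lem:G} before invoking the larger primes) is a sensible efficiency refinement but not a different argument, since the crux in both cases remains the recorded computational verification that the eleven primes cover all of $[1,640]$.
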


\begin{lemma}\label{lem:-1}
Let $p=641$. Recall $\mathcal{G}$ and $\kappa$ defined in Lemma~$\ref{lem:G}$, and recall $\widetilde{\kappa}=(\widetilde{\kappa}',\widetilde{\kappa}'')$ defined in Lemma~$\ref{lem:pair}$. Let $r$ be a nonnegative integer with base $p$ representation $r=\sum_{i=0}^jr_ip^i$, where $r_i\in[0,p-1]$ for all $i\in[0,j]$.
\begin{enumerate}[\indent$(a)$]
\item\label{item:onedigit} If there exists $i_0\in[0,j]$ such that $r_{i_0}\in\mathcal{G}$, then for all positive integers $k$ such that $k\mod{r+(\kappa(r_{i_0})-r_{i_0})p^{i_0}}{p^{j+1}}$, we have
$$\binom{k}{r}\mod{-1}{p}.$$
\item\label{item:twodigits} If there exist $i_1,i_2\in[0,j]$ such that $r_{i_1},r_{i_2}\in[1,515]$, then for all positive integers $k$ such that $k\mod{r+(\widetilde{\kappa}'(r_{i_1},r_{i_2})-r_{i_1})p^{i_1}+(\widetilde{\kappa}''(r_{i_1},r_{i_2})-r_{i_2})p^{i_2}}{p^{j+1}}$, we have
$$\binom{k}{r}\mod{-1}{p}.$$
\end{enumerate}
\end{lemma}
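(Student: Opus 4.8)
The plan is to reduce both parts of Lemma~\ref{lem:-1} to Lucas' Theorem (Theorem~\ref{thm:lucas}) combined with the computational facts recorded in Lemmas~\ref{lem:G} and~\ref{lem:pair}. The guiding principle is that the congruence class imposed on $k$ is engineered precisely so that, digit by digit in base $p$, the binomial coefficient factors from Lucas split into a single ``active'' factor contributing $-1\pmod p$ and a collection of ``passive'' factors each contributing $1\pmod p$.

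For part~\ref{item:onedigit}, first I would write down the base $p$ representation of the chosen modulus $k \mod{r+(\kappa(r_{i_0})-r_{i_0})p^{i_0}}{p^{j+1}}$. The key observation is that adding $(\kappa(r_{i_0})-r_{i_0})p^{i_0}$ changes only the digit in position $i_0$: since $\kappa(r_{i_0})\in[0,p-1]$ by Lemma~\ref{lem:G}, the $i_0$-th digit of $k$ becomes $\kappa(r_{i_0})$ while every other digit of $k$ in positions $[0,j]$ equals the corresponding digit $r_i$ of $r$, and all higher digits of $k$ match $r$ as well (being forced by the congruence modulo $p^{j+1}$, mirroring the setup in Lemma~\ref{lem:1}). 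Applying Theorem~\ref{thm:lucas} then gives
\[
\binom{k}{r}\equiv\binom{\kappa(r_{i_0})}{r_{i_0}}\prod_{\substack{i=0\\ i\neq i_0}}^{j}\binom{r_i}{r_i}\pmod p.
\]
Each factor $\binom{r_i}{r_i}=1$, and the single active factor is $\equiv-1\pmod p$ by the defining property of $\kappa$ in Lemma~\ref{lem:G}, yielding the claimed congruence.

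Part~\ref{item:twodigits} follows the identical template, now perturbing two digits simultaneously. Because $i_1\neq i_2$ and both shifts $(\widetilde{\kappa}'(r_{i_1},r_{i_2})-r_{i_1})$ and $(\widetilde{\kappa}''(r_{i_1},r_{i_2})-r_{i_2})$ land the respective digits back in $[0,p-1]$ by Lemma~\ref{lem:pair}, the two modifications act on independent digit positions without carrying. Lucas' Theorem then produces the product $\binom{\widetilde{\kappa}'(r_{i_1},r_{i_2})}{r_{i_1}}\binom{\widetilde{\kappa}''(r_{i_1},r_{i_2})}{r_{i_2}}$ times $\prod\binom{r_i}{r_i}=1$ over the remaining positions, and Lemma~\ref{lem:pair} guarantees this product is $\equiv-1\pmod p$.

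The only genuine subtlety to verify carefully is that the digit perturbations never trigger a carry: I must confirm that replacing the $i_0$-th (respectively $i_1$-th and $i_2$-th) digit by a value in $[0,p-1]$ keeps the base $p$ expansion well-formed and leaves all other digits in $[0,j]$ untouched, so that the hypothesis $r_i\in[0,p-1]$ transfers cleanly to $k_i=r_i$ for the passive positions. This is a bookkeeping check rather than a real obstacle, since each $\kappa$ and $\widetilde{\kappa}$ value is prescribed to lie in $[0,p-1]$; once the digit identification is pinned down, both parts are immediate consequences of Lucas' Theorem and the computational lemmas.
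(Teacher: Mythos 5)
Your proposal follows the same route as the paper's proof: apply Lucas' Theorem digit by digit in base $p$, with the passive positions contributing $\binom{r_i}{r_i}=1$ and the active position(s) contributing $-1$ via Lemma~\ref{lem:G} (respectively Lemma~\ref{lem:pair}), and note that no carries occur because the replacement digits lie in $[0,p-1]$. One small correction: the congruence modulo $p^{j+1}$ does \emph{not} force the digits of $k$ in positions above $j$ to match those of $r$ --- they are completely arbitrary --- but this is harmless, since $r$ has digit $0$ in every such position, so the corresponding Lucas factors are $\binom{k_i}{0}=1$; this is exactly how the paper's proof disposes of them, by writing $k=\sum_{i=0}^{j'}k_ip^i$ with $j'\geq j$ and setting $r_i=0$ for $i\in[j+1,j']$.
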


\begin{proof}
$(\ref{item:onedigit})$ Let the base $p$ representation of $k$ be $k=\sum_{i=0}^{j'}k_ip^i$, where $j\leq j'$, $k_i=r_i$ for all $i\in[0,j]\setminus\{i_0\}$, $k_{i_0}=\kappa(r_{i_0})$, and $k_i\in[0,p-1]$ for all $i\in[j+1,j']$. Furthermore, define $r_i=0$ for all $i\in[j+1,j']$. By Theorem~\ref{thm:lucas},
$$\binom{k}{r}\equiv\left(\prod_{i=0}^{j}\binom{k_i}{r_i}\right)\left(\prod_{i=j+1}^{j'}\binom{k_i}{r_i}\right)\equiv\left(\prod_{\substack{i=0\\i\neq i_0}}^{j}\binom{r_i}{r_i}\right)
\binom{\kappa(r_{i_0})}{r_{i_0}}\left(\prod_{i=j+1}^{j'}\binom{k_i}{0}\right)\mod{-1}{p}.$$

$(\ref{item:twodigits})$ Let the base $p$ representation of $k$ be $k=\sum_{i=0}^{j'}k_ip^i$, where $j\leq j'$, $k_i=r_i$ for all $i\in[0,j]\setminus\{i_1,i_2\}$, $k_{i_1}=\widetilde{\kappa}'(r_{i_1},r_{i_2})$, $k_{i_2}=\widetilde{\kappa}''(r_{i_1},r_{i_2})$, and $k_i\in[0,p-1]$ for all $i\in[j+1,j']$. Furthermore, define $r_i=0$ for all $i\in[j+1,j']$. By Theorem~\ref{thm:lucas},
$$\binom{k}{r}\equiv\left(\prod_{\substack{i=0\\i\notin\{i_1,i_2\}}}^{j}\binom{r_i}{r_i}\right)
\binom{\widetilde{\kappa}'(r_{i_1},r_{i_2})}{r_{i_1}}\binom{\widetilde{\kappa}''(r_{i_1},r_{i_2})}{r_{i_2}}\left(\prod_{i=j+1}^{j'}\binom{k_i}{0}\right)\mod{-1}{p}.$$
\end{proof}

\begin{theorem}\label{thm:sierpinski}
Let $p=641$, and recall $\mathcal{G}$ defined in Lemma~$\ref{lem:G}$. Let $r$ be a nonnegative integer with base $p$ representation $r=\sum_{i=0}^jr_ip^i$, where $r_i\in[0,p-1]$ for all $i\in[0,j]$, such that at least one of the following conditions is satisfied:
\begin{enumerate}[\indent$(i)$]
\item\label{item:sierpinskiG} there exists $i_0\in[0,j]$ such that $r_{i_0}\in\mathcal{G}$; or
\item\label{item:sierpinskipairs} there exists $i_1,i_2\in[0,j]$ such that $r_{i_1},r_{i_2}\in[1,515]$.
\end{enumerate}
Then there exist infinitely many positive integers $k$ such that $\binom{k}{r}$ is a Sierpi\'{n}ski number.
\end{theorem}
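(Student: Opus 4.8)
The plan is to realize $\binom{k}{r}$ as a Sierpi\'{n}ski number in the classical way, by producing a covering system of the integers together with, for each congruence class, a fixed prime that divides $\binom{k}{r}\cdot 2^n+1$ for every $n$ in that class. I would use the covering \eqref{eq:covering} with $\tau=6$: the single class $n\equiv 0\pmod{64}$ together with $n\equiv 2^{\ell-1}\pmod{2^\ell}$ for $\ell\in[1,6]$. The primes attached to these classes come from the Fermat numbers $2^{2^{\ell-1}}+1$; the point is that if an odd prime $p$ divides $2^{2^{\ell-1}}+1$, then $2$ has order exactly $2^\ell$ modulo $p$, so $2^n\equiv -1\pmod p$ whenever $n\equiv 2^{\ell-1}\pmod{2^\ell}$, while $2^n\equiv 1\pmod p$ whenever $n\equiv 0\pmod{2^\ell}$.

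For $\ell\in[1,5]$ I would attach the Fermat primes $3,5,17,257,65537$ to the classes $n\equiv 2^{\ell-1}\pmod{2^\ell}$. Since $2^n\equiv -1$ modulo each of these primes on its class, I need $\binom{k}{r}\equiv 1$ modulo each, and Lemma~\ref{lem:1} supplies this by forcing $k\equiv r$ modulo the appropriate power of each prime. The decisive class is $n\equiv 0\pmod{64}$, which I would assign to $641$: because $641\mid 2^{32}+1$ the order of $2$ is $64$, so $2^n\equiv 1\pmod{641}$ on this class and divisibility of $\binom{k}{r}\cdot 2^n+1$ now demands $\binom{k}{r}\equiv -1\pmod{641}$. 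This is exactly the output of Lemma~\ref{lem:-1}, and it is here that the hypotheses of the theorem are used: under condition $(\ref{item:sierpinskiG})$ I would invoke Lemma~\ref{lem:-1}$(\ref{item:onedigit})$, and under condition $(\ref{item:sierpinskipairs})$ I would invoke Lemma~\ref{lem:-1}$(\ref{item:twodigits})$, each producing a congruence on $k$ modulo $641^{\,j+1}$ that forces $\binom{k}{r}\equiv -1\pmod{641}$.

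The one class still uncovered is $n\equiv 32\pmod{64}$, and $641$ cannot be reused for it: there one needs $\binom{k}{r}\equiv 1\pmod{641}$, which clashes with the $\binom{k}{r}\equiv -1$ already imposed. I would instead use the companion prime factor $6700417$ of $2^{32}+1$, whose existence as a second prime factor is exactly the sort of fact recorded in Lemma~\ref{lem:10primes}; since $2^n\equiv -1\pmod{6700417}$ on this class, I again require $\binom{k}{r}\equiv 1\pmod{6700417}$ via Lemma~\ref{lem:1}. To meet the parity requirement in the definition of a Sierpi\'{n}ski number I would also force $\binom{k}{r}$ to be odd by imposing $k\equiv r$ modulo the relevant power of $2$, again using Lemma~\ref{lem:1}. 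All the resulting congruences on $k$ have moduli that are powers of the pairwise distinct primes $2,3,5,17,257,65537,641,6700417$, so the Chinese Remainder Theorem combines them into a single residue class, which contains infinitely many positive integers $k$.

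It then remains to check genuine compositeness: for every $n$ the covering guarantees that one of the listed primes $p$ divides $\binom{k}{r}\cdot 2^n+1$, and once $k$ is large enough in the chosen residue class we have $\binom{k}{r}\cdot 2^n+1>p$, making this value a proper multiple of $p$ and hence composite. Infinitely many $k$ in the class are this large, which yields the conclusion. I expect the main obstacle to be the conflict at modulus $64$: Lemma~\ref{lem:-1} delivers the residue $-1$ only modulo $641$, so the role of $641$ is forced onto the class $n\equiv 0\pmod{64}$, and covering $n\equiv 32\pmod{64}$ then genuinely requires a second prime factor of $2^{32}+1$. Confirming that this forced assignment is globally consistent under the Chinese Remainder Theorem, and that conditions $(\ref{item:sierpinskiG})$ and $(\ref{item:sierpinskipairs})$ are precisely the triggers for Lemma~\ref{lem:-1}, is where the real work sits.
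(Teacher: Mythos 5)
Your proposal is correct and matches the paper's own proof essentially line for line: the same covering \eqref{eq:covering} with $\tau=6$, the same assignment of $3,5,17,257,65537,6700417$ to the classes $n\equiv 2^{\ell-1}\pmod{2^\ell}$ with $\binom{k}{r}\equiv 1$ forced by Lemma~\ref{lem:1}, the prime $641$ on $n\equiv 0\pmod{2^6}$ with $\binom{k}{r}\equiv -1$ forced by Lemma~\ref{lem:-1}$(\ref{item:onedigit})$ or $(\ref{item:twodigits})$ according to the two hypotheses, the parity condition via $k\equiv r$ modulo a power of $2$, and the Chinese remainder theorem plus a size condition to conclude. The only cosmetic difference is that the paper introduces $6700417$ directly as the Zsigmondy-type prime $p_6$ rather than citing Lemma~\ref{lem:10primes}, which is reserved for Theorem~\ref{thm:sierpinski10primes}.
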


\begin{proof}
Let $p_0=641$, $p_1=3$, $p_2=5$, $p_3=17$, $p_4=257$, $p_5=65537$, and $p_6=6700417$. Note that for each $\ell\in[1,6]$,
\begin{center}
$p_\ell\mid2^{2^\ell}-1$ and $p_\ell\nmid2^{2^{\widetilde{\ell}}}-1$ for any $\widetilde{\ell}<\ell$,
\end{center}
so we also have $2^{2^{\ell-1}}\mod{-1}{p_\ell}$.

Consider the covering system \eqref{eq:covering} with $\tau=6$. Suppose that $n\mod{2^{\ell-1}}{2^\ell}$ for some $\ell\in[1,6]$. Then
$$2^n=\left(2^{2^\ell}\right)^t\cdot2^{2^{\ell-1}}\equiv1^t\cdot(-1)\mod{-1}{p_\ell}$$
for some nonnegative integer $t$. Hence,
$$\binom{k}{r}\cdot2^n+1\mod{-\binom{k}{r}+1}{p_\ell}.$$
Let $j_\ell$ be the smallest nonnegative integer such that $r<p_\ell^{j_\ell+1}$ for each $\ell\in[1,6]$. By Lemma~\ref{lem:1}, if 
\begin{equation}\label{eq:kconditions1}
k\equiv r\pmod{p_\ell^{j_\ell+1}},
\end{equation}
then $\binom{k}{r}\cdot 2^n+1\equiv 0\pmod{p_\ell}$.

Since \eqref{eq:covering} is a covering system, if $n\not\mod{2^{\ell-1}}{2^\ell}$ for any $\ell\in[1,6]$, then $n\mod{0}{2^6}$. Note that $p_0\mid2^{2^6}-1$, so $2^n\equiv 1\pmod{p_0}$ and 
$$\binom{k}{r}\cdot 2^n+1\equiv\binom{k}{r}+1\pmod{p_0}.$$
Let $j_0$ be the smallest nonnegative integer such that $r<p_0^{j_0+1}$. Recall the function $\kappa$ defined in Lemma~\ref{lem:G}. By Lemma~$\ref{lem:-1}(\ref{item:onedigit})$, if condition~$(\ref{item:sierpinskiG})$ of this theorem is satisfied and 
\begin{equation}\label{eq:kconditions2}
k\mod{r+(\kappa(r_{i_0})-r_{i_0})p_0^{i_0}}{p_0^{j_0+1}},
\end{equation}
then $\binom{k}{r}\cdot 2^n+1\mod{0}{p_0}$.

Hence, for any natural number $n$, if the congruence in \eqref{eq:kconditions1} is satisfied for each $\ell\in[1,6]$ and the congruence in \eqref{eq:kconditions2} is satisfied, then $\binom{k}{r}\cdot 2^n+1$ is divisible by some prime $p_\ell$ with $0\leq\ell\leq 6$.  Using Lemma~\ref{lem:1}, we ensure that $\binom{k}{r}$ is odd by further requiring $k\equiv r\pmod{2^{j+1}}$, where $j$ is the smallest nonnegative integer such that $r<2^{j+1}$.  By the Chinese remainder theorem, there are infinitely many such integers $k$.  Choosing $k$ so that $\binom{k}{r}\geq p_6$ ensures that $\binom{k}{r}$ is a Sierpi\'{n}ski number.

If condition~$(\ref{item:sierpinskipairs})$ of this theorem is satisfied, then the same argument applies by replacing Lemma~$\ref{lem:-1}(\ref{item:onedigit})$ and \eqref{eq:kconditions2} with Lemma~$\ref{lem:-1}(\ref{item:twodigits})$ and the congruence
$$k\mod{r+(\widetilde{\kappa}'(r_{i_1},r_{i_2})-r_{i_1})p_0^{i_1}+(\widetilde{\kappa}''(r_{i_1},r_{i_2})-r_{i_2})p_0^{i_2}}{p_0^{j_0+1}}.$$ 
\end{proof}

The following corollary follows from Theorem~$\ref{thm:sierpinski}(\ref{item:sierpinskiG})$ since every odd positive integer must have an odd digit in its base $p$ representation.

\begin{corollary}\label{cor:corollary}
Let $r$ be an odd positive integer. Then there exist infinitely many positive integers $k$ such that $\binom{k}{r}$ is a Sierpi\'{n}ski number.
\end{corollary}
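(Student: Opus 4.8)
The plan is to apply Theorem~\ref{thm:sierpinski}(\ref{item:sierpinskiG}) directly, so the real content is to verify that an odd $r$ automatically satisfies hypothesis~(\ref{item:sierpinskiG}), namely that at least one base-$641$ digit of $r$ lies in $\mathcal{G}$. The mechanism I would exploit is that the chosen prime $p=641$ is itself odd.

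First I would record the elementary observation that, since $p=641$ is odd, every power $p^i$ is odd, so $p^i\equiv 1\pmod 2$ for all $i\geq 0$. Writing the base-$p$ expansion $r=\sum_{i=0}^j r_i p^i$, this yields $r\equiv\sum_{i=0}^j r_i\pmod 2$; that is, the parity of $r$ coincides with the parity of its digit sum. Then, using the hypothesis that $r$ is odd, I would conclude that $\sum_{i=0}^j r_i$ is odd, and in particular that at least one digit $r_{i_0}$ must be odd.

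Next I would note that since digits lie in $[0,p-1]$ and $0$ is even, any odd digit necessarily lies in $[1,p-1]$. By the definition of $\mathcal{G}$ in Lemma~\ref{lem:G}, which contains every odd element of $[1,p-1]$, we therefore have $r_{i_0}\in\mathcal{G}$. This is exactly condition~(\ref{item:sierpinskiG}) of Theorem~\ref{thm:sierpinski}, so that theorem immediately supplies infinitely many positive integers $k$ for which $\binom{k}{r}$ is a Sierpi\'{n}ski number.

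I do not expect any genuine obstacle here: the argument is a one-line reduction once the digit-sum parity observation is in place. The only point requiring care is the oddness of $p=641$, which is precisely what forces an odd digit to appear in the base-$p$ representation of an odd integer; for an even base this step would collapse, but no such difficulty arises with the prime at hand.
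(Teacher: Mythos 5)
Your proposal is correct and is essentially identical to the paper's own (very brief) justification: the paper also deduces the corollary from Theorem~\ref{thm:sierpinski}(\ref{item:sierpinskiG}) by noting that every odd positive integer must have an odd digit in its base~$641$ representation, and your digit-sum parity argument is just the explicit verification of that fact.
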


There are $245$ integers $r\in[1,2563]$ that do not satisfy the conditions in Theorem~\ref{thm:sierpinski}. Nonetheless, we can tackle these values of $r$ in the following theorem.

\begin{theorem}\label{thm:sierpinski10primes}
Let $r\in[1,2563]$. Then there exist infinitely many positive integers $k$ such that $\binom{k}{r}$ is a Sierpi\'{n}ski number.
\end{theorem}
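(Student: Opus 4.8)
The plan is to partition $[1,2563]$ according to the base-$641$ representation of $r$ and to run the covering-system argument of Theorem~\ref{thm:sierpinski}, but allowing the prime $641$ to be replaced by a suitable prime from $\mathcal{P}$. First I would clear out the two-digit values $r\in[641,2563]$: writing $r=r_1\cdot 641+r_0$ with $r_1\in\{1,2,3\}$ and $r_0\in[0,640]$, each possible leading digit $1,2,3$ lies in $\mathcal{G}$, so condition~$(\ref{item:sierpinskiG})$ of Theorem~\ref{thm:sierpinski} is automatically satisfied and the conclusion is immediate from that theorem. Hence it remains to treat the single-digit values $r\in[1,640]$, which include all $245$ exceptional $r$.

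Fix $r\in[1,640]$. By Lemma~\ref{lem:10primes} there is a prime $p\in\mathcal{P}$ and an integer $k_0$ with $\binom{k_0}{r}\equiv-1\pmod p$; let $\tau$ be the integer for which $p\mid 2^{2^{\tau-1}}+1$, so that $2$ has multiplicative order $2^\tau$ modulo $p$. Since $r<641\le p$, the integer $r$ is a single base-$p$ digit, so Theorem~\ref{thm:lucas} gives $\binom{k}{r}\equiv\binom{k\bmod p}{r}\pmod p$; thus the single condition $k\equiv k_0\pmod p$ forces $\binom{k}{r}\equiv-1\pmod p$. I would then invoke the covering system~\eqref{eq:covering} with this value of $\tau$. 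For each $\ell\in[1,\tau]$ choose a prime divisor $q_\ell$ of the Fermat number $2^{2^{\ell-1}}+1$; every such divisor satisfies $2^{2^{\ell-1}}\equiv-1\pmod{q_\ell}$, so whenever $n\equiv2^{\ell-1}\pmod{2^\ell}$ we have $2^n\equiv-1\pmod{q_\ell}$. Imposing $k\equiv r\pmod{q_\ell^{j_\ell+1}}$, with $j_\ell$ minimal such that $r<q_\ell^{j_\ell+1}$, forces $\binom{k}{r}\equiv1\pmod{q_\ell}$ by Lemma~\ref{lem:1}, whence $\binom{k}{r}\cdot2^n+1\equiv-1+1\equiv0\pmod{q_\ell}$. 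The sole residue class left uncovered by~\eqref{eq:covering} is $n\equiv0\pmod{2^\tau}$, where $2^n\equiv1\pmod p$; there the condition $k\equiv k_0\pmod p$ yields $\binom{k}{r}\cdot2^n+1\equiv-1+1\equiv0\pmod p$.

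The step demanding care is the compatibility of these congruences and the requirement that the divisor assigned to each residue class be a proper one. Because distinct Fermat numbers are pairwise coprime, the primes $q_1,\dots,q_\tau$ are automatically distinct, and $p$ (which divides $2^{2^{\tau-1}}+1$) coincides with none of the $q_\ell$ for $\ell\neq\tau$. The only possible clash is $q_\tau=p$, which would impose the contradictory requirements $\binom{k}{r}\equiv1$ and $\binom{k}{r}\equiv-1\pmod p$; this is exactly what the defining property of $\mathcal{P}$ excludes, since $(2^{2^{\tau-1}}+1)/p$ has a prime factor different from $p$, and that prime may be taken as $q_\tau$. All the moduli are therefore powers of distinct odd primes, so after adjoining $k\equiv r\pmod{2^{j+1}}$ (with $j$ minimal such that $r<2^{j+1}$) to keep $\binom{k}{r}$ odd via Lemma~\ref{lem:1}, the Chinese remainder theorem produces infinitely many admissible $k$. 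Choosing $k$ large enough that $\binom{k}{r}$ exceeds each of $q_1,\dots,q_\tau,p$ makes $\binom{k}{r}\cdot2^n+1$ a proper multiple of one of these primes for every $n$, hence composite, so $\binom{k}{r}$ is a Sierpi\'{n}ski number. The main obstacle is precisely this coincidence $q_\tau=p$, which is why Lemma~\ref{lem:10primes} is restricted to primes $p$ whose cofactor in $2^{2^{\tau-1}}+1$ supplies an independent prime.
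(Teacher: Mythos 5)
Your proposal is correct and follows essentially the same route as the paper: the same split at $r=641$ (handling $r\in[641,2563]$ via Theorem~\ref{thm:sierpinski} with leading digit $1,2,3\in\mathcal{G}$), and for $r\in[1,640]$ the same covering system \eqref{eq:covering} with Lemma~\ref{lem:10primes} supplying the prime for the class $n\equiv 0\pmod{2^\tau}$ and the cofactor prime of $2^{2^{\tau-1}}+1$ resolving the potential clash in the class $n\equiv 2^{\tau-1}\pmod{2^\tau}$. The only difference is minor and cosmetic: you obtain the auxiliary primes $q_\ell$ as prime divisors of the Fermat numbers $2^{2^{\ell-1}}+1$ and get their distinctness from pairwise coprimality of Fermat numbers, whereas the paper invokes Zsigmondy's theorem for primitive prime divisors of $2^{2^\ell}-1$ and argues distinctness via orders --- these are the same primes, justified slightly more elementarily in your version.
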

\begin{proof}
If $r\in[641,2563]$, then the conclusion follows from Theorem~$\ref{thm:sierpinski}(\ref{item:sierpinskiG})$ since the base $p$ representation of $r$ contains the digits $1$, $2$, or $3$, which are in $\mathcal{G}$ defined in Lemma~\ref{lem:G}.

Suppose that $r\in[1,640]$. Let $\mathcal{P}$ be the set of primes defined in Lemma~\ref{lem:10primes}.  By Lemma~\ref{lem:10primes}, there exist $p_0\in\mathcal{P}$ and $k'\in\mathbb{N}$ such that $\binom{k'}{r}\mod{-1}{p_0}$. By the definition of $\mathcal{P}$, there is some integer $\tau\geq 5$ and some prime $p_\tau\neq p_0$ such that $p_0$ and $p_\tau$ both divide $2^{2^{\tau-1}}+1$. Consequently, $p_0$ and $p_\tau$ are both prime factors of $2^{2^{\tau}}-1$. By Theorem~\ref{thm:bang}, for each $\ell\in[1,\tau-1]$, let $p_\ell$ be a prime such that
\begin{center}
$p_\ell\mid2^{2^\ell}-1$ and $p_\ell\nmid2^{2^{\widetilde{\ell}}}-1$ for any $\widetilde{\ell}<\ell$,
\end{center}
so we also have $2^{2^{\ell-1}}\mod{-1}{p_\ell}$. Note that $p_0$ and $p_\tau$ are distinct from $p_\ell$ for all $\ell\in[1,\tau-1]$. This is because $2^{2^\ell}\mod{1}{p_\ell}$, implying that $2^{2^{\tau-1}}\mod{1}{p_\ell}$, while $2^{2^{\tau-1}}\mod{-1}{p_0}$ and $2^{2^{\tau-1}}\mod{-1}{p_\tau}$.

Consider the covering system \eqref{eq:covering}. Suppose that $n\mod{2^{\ell-1}}{2^\ell}$ for some $\ell\in[1,\tau]$. Let $j_\ell$ be the smallest nonnegative integer such that $r<p^{j_\ell+1}$. Similar to the argument presented in proof of Theorem~\ref{thm:sierpinski}, by Lemma~\ref{lem:1}, if
\begin{equation}\label{eq:kcon1}
k\mod{r}{p_{\ell}^{j_{\ell}+1}},
\end{equation} 
then $\binom{k}{r}\cdot 2^{n}+1\mod{0}{p_\ell}$.

Since \eqref{eq:covering} is a covering system, if $n\not\mod{2^{\ell-1}}{2^\ell}$ for any $\ell\in[1,\tau]$, then $n\mod{0}{2^\tau}$. Note that $r<p_0$, so by the definition of $k'$, for all $k\in\N$ such that 
\begin{equation}\label{eq:kcon2}
k\mod{k'}{p_0},
\end{equation} 
we have $\binom{k}{r}\mod{-1}{p_0}$, which implies that $\binom{k}{r}\cdot 2^n+1\equiv 0\pmod{p_0}$.  

The result follows by letting $k\geq\max\{p_0,p_1,\ldots,p_\tau\}$ satisfy the congruence relations \eqref{eq:kcon1} for all $\ell\in[1,\tau]$, \eqref{eq:kcon2}, and $k\mod{r}{2^{j+1}}$, where $j$ is the smallest nonnegative integer such that $r<2^{j+1}$.
\end{proof}

There are $641^2-1=410880$ one-digit or two-digit positive integers $\overline{r'r''}$ in base $641$, and from the code given in Appendix~\ref{appendixB}, only $3771-1=3770$ of them do not have any solution $(x',x'')\in[0,640]^2$ for the equation
$$\binom{x'}{r'}\binom{x''}{r''}\mod{-1}{641}.$$ 
For a positive integer $x$, let $S(x)$ be the number of $r\in[1,x]$ such that $\binom{k}{r}$ is a Sierpi\'{n}ski number for infinitely many positive integers $k$. Then $S(410880)/410880>99\%$, and the next theorem addresses $S(x)/x$ as $x$ tends to infinity.

\begin{theorem}\label{thm:sdensity}
The density $S(x)/x$ gets arbitrarily close to $1$ as $x$ tends to infinity.
\end{theorem}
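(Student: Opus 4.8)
The plan is to reduce the statement to an elementary digit-avoidance count in base $641$, relying only on condition~$(\ref{item:sierpinskiG})$ of Theorem~\ref{thm:sierpinski}. First I would introduce the set of \emph{bad digits} $\mathcal{B}=[0,640]\setminus\mathcal{G}$, and record the two facts that drive everything: $0\in\mathcal{B}$ (since $0\notin\mathcal{G}$), and $\mathcal{G}\neq\emptyset$ (indeed $\mathcal{G}$ contains every odd element of $[1,p-1]$), so that $1\leq|\mathcal{B}|<641$. By Theorem~\ref{thm:sierpinski}$(\ref{item:sierpinskiG})$, every integer $r$ whose base-$641$ representation contains at least one digit lying in $\mathcal{G}$ has the property that $\binom{k}{r}$ is a Sierpi\'{n}ski number for infinitely many $k$, and is therefore counted by $S(x)$. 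Consequently, the integers in $[1,x]$ that are \emph{not} counted by $S(x)$ are confined to those whose every base-$641$ digit lies in $\mathcal{B}$, and it suffices to show this exceptional set has density $0$.

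Next I would bound the size of this digit-restricted set. Setting $m=\floor{\log_{641}x}+1$, every $r\in[1,x]$ has at most $m$ base-$641$ digits, so---padding with leading zeros, which is legitimate precisely because $0\in\mathcal{B}$---each exceptional $r$ corresponds to a length-$m$ string over the alphabet $\mathcal{B}$. This yields the bound $x-S(x)\leq|\mathcal{B}|^m$. Combining it with the elementary inequality $641^{m-1}\leq x$ gives
$$\frac{x-S(x)}{x}\leq\frac{|\mathcal{B}|^m}{641^{m-1}}=641\left(\frac{|\mathcal{B}|}{641}\right)^m.$$
Since $|\mathcal{B}|<641$ while $m\to\infty$ as $x\to\infty$, the right-hand side tends to $0$; as $S(x)/x\leq1$ always holds, this forces $S(x)/x\to1$, which is the claim.

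The argument is essentially complete at this point, and I expect no serious obstacle---the content is really the observation that condition~$(\ref{item:sierpinskiG})$ alone already makes the complementary set exponentially thin in the number of base-$641$ digits, so the full strength of the pair condition~$(\ref{item:sierpinskipairs})$ and Lemma~\ref{lem:pair} is not needed for the asymptotic density (they serve only to improve the finite counts such as $S(410880)/410880>99\%$). The one place deserving care is the transition from the clean range $[1,641^m)$, where the count $|\mathcal{B}|^m$ is exact, to an arbitrary real $x$; the choice $m=\floor{\log_{641}x}+1$ together with $641^{m-1}\leq x$ handles this cleanly, and everything else is routine bookkeeping.
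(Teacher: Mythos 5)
Your proof is correct and takes essentially the same route as the paper's: both reduce the theorem to the observation that integers all of whose base-$641$ digits avoid $\mathcal{G}$ (via Theorem~\ref{thm:sierpinski}$(i)$ alone) are exponentially rare, since the complement of $\mathcal{G}$ is a proper subset of the digit set. The only difference is that the paper bounds the density just along the subsequence $x=641^{j+1}-1$, whereas your choice $m=\lfloor\log_{641}x\rfloor+1$ interpolates to arbitrary $x$ --- a minor refinement of the same counting argument rather than a different method.
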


\begin{proof}
Let $p=641$. Note that the cardinality of $\mathcal{G}$, which is defined in Lemma~\ref{lem:G}, is $395$. Hence, the number of integers less than $p^{j+1}$  such that no digit comes from $\mathcal{G}$ when expressed in base $p$ is
$$1-\frac{S(p^{j+1}-1)}{p^{j+1}-1}\leq\frac{(p-395)^{j+1}-1}{p^{j+1}-1},$$
which tends to $0$ as $j$ tends to infinity.
\end{proof}

\section{Generalizations of Sierpi\'{n}ski and Riesel binomial coefficients}\label{sec:generalization}

In 2009, Brunner \emph{et al.} generalized the concept of a Sierpi\'{n}ski number in the following way \cite{bckl}.

\begin{definition}
For a positive integer $a$, we call a positive integer $k$ an \emph{$a$-Sierpi\'{n}ski number} if $\gcd(k+1,a-1)=1$, $k$ is not a power of $a$, and $k\cdot a^n+1$ is composite for all natural numbers $n$.
\end{definition}

The following is an analogous definition for an $a$-Riesel number.
\begin{definition}
For a positive integer $a$, we call a positive integer $k$ an \emph{$a$-Riesel number} if $\gcd(k-1,a-1)=1$, $k$ is not a power of $a$, and $k\cdot a^n-1$ is composite for all natural numbers $n$.
\end{definition}

The next theorem is a generalization of Corollary~\ref{cor:corollary}.

\begin{theorem}\label{thm:oddr}
Let $a$ and $r$ be positive integers such that $a+1$ is not a power of $2$ and $r$ is odd.  Further assume that there exists a positive integer $\tau$ such that $a^{2^\tau}-1$ is divisible by distinct primes $p_0$ and $p_\tau$, where neither $p_0$ nor $p_\tau$ divides $a^{2^{\widetilde{\ell}}}-1$ for any $\widetilde{\ell}\in[0,\tau-1]$.  Then each of the following holds:
\begin{enumerate}[\indent$(a)$]
\item there exist infinitely many positive integers $k$ such that $\binom{k}{r}$ is an $a$-Sierpi\'{n}ski number;
\item there exist infinitely many positive integers $k$ such that $\binom{k}{r}$ is an $a$-Riesel number.
\end{enumerate}
\end{theorem}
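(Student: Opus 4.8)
The plan is to adapt the covering-system argument of Theorems~\ref{thm:sierpinski} and~\ref{thm:sierpinski10primes} to base $a$, drawing the two ``order $2^\tau$'' primes from the hypothesis and the remaining primes from Zsigmondy's Theorem, while replacing the computational Lemmas~\ref{lem:G}--\ref{lem:-1} by a single elementary congruence that exploits the oddness of $r$. First I would fix the given $\tau$ together with the distinct primes $p_0,p_\tau\mid a^{2^\tau}-1$; since neither divides $a^{2^{\widetilde\ell}}-1$ for $\widetilde\ell<\tau$, the multiplicative order of $a$ modulo each is exactly $2^\tau$, so $a^{2^{\tau-1}}\equiv-1\pmod{p_0}$ and $a^{2^{\tau-1}}\equiv-1\pmod{p_\tau}$. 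For each $\ell\in[1,\tau-1]$, Theorem~\ref{thm:bang} (with $b=1$ and $n=2^\ell$) supplies a primitive prime divisor $p_\ell$ of $a^{2^\ell}-1$: the exception $n=6$ never occurs since $2^\ell\neq6$, and the exception ``$a+1$ a power of $2$ with $n=2$'' is ruled out for $\ell=1$ precisely by the hypothesis that $a+1$ is not a power of $2$. Each $p_\ell$ has order $2^\ell$, hence $a^{2^{\ell-1}}\equiv-1\pmod{p_\ell}$; comparing orders shows $p_0,p_1,\dots,p_\tau$ are pairwise distinct and all odd, since an even prime among them would divide $a-1=a^{2^0}-1$, contradicting primitivity.

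The replacement for the computational lemmas is the classical congruence $\binom{p-1}{s}\equiv(-1)^s\pmod p$, valid for every odd prime $p$ and $s\in[0,p-1]$. Because $p$ is odd, the base-$p$ digits of an odd $r$ sum to an odd number, so $r$ has at least one odd base-$p$ digit $r_{i_0}$; setting the corresponding digit of $k$ to $p-1$ while leaving the remaining digits in $[0,j]$ equal to those of $r$ (where $j$ is minimal with $r<p^{j+1}$) yields, through Theorem~\ref{thm:lucas}, $\binom{k}{r}\equiv\binom{p-1}{r_{i_0}}\equiv-1\pmod p$. Thus for each of $p_0,\dots,p_\tau$ I can force $\binom{k}{r}\equiv-1$ by one congruence on $k$ modulo the relevant prime power, and by Lemma~\ref{lem:1} I can instead force $\binom{k}{r}\equiv1$ by requiring $k\equiv r$ modulo that power. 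The necessity of \emph{two} primes of order $2^\tau$ is exactly as in Theorem~\ref{thm:sierpinski10primes}: on $n\equiv2^{\tau-1}\pmod{2^\tau}$ one needs $a^n\equiv-1$ paired with $\binom{k}{r}\equiv1$, while on $n\equiv0\pmod{2^\tau}$ one needs $a^n\equiv1$ paired with $\binom{k}{r}\equiv-1$, and no single prime can play both roles.

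With these tools, part~(a) follows by imposing, through the covering system~\eqref{eq:covering} with this $\tau$ and the Chinese remainder theorem, the congruences $k\equiv r\pmod{p_\ell^{j_\ell+1}}$ for $\ell\in[1,\tau]$ (forcing $\binom{k}{r}\equiv1\pmod{p_\ell}$, so $p_\ell\mid\binom{k}{r}a^n+1$ whenever $n\equiv2^{\ell-1}\pmod{2^\ell}$) together with the odd-digit congruence modulo $p_0^{j_0+1}$ (forcing $\binom{k}{r}\equiv-1\pmod{p_0}$, so $p_0\mid\binom{k}{r}a^n+1$ whenever $n\equiv0\pmod{2^\tau}$), where each $j_\ell$ is minimal with $r<p_\ell^{j_\ell+1}$. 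Part~(b) is the same with the two congruence types interchanged: force $\binom{k}{r}\equiv-1\pmod{p_\ell}$ for $\ell\in[1,\tau]$ and $\binom{k}{r}\equiv1\pmod{p_0}$, so that $\binom{k}{r}a^n-1$ is always divisible by some $p_\ell$. In both cases all moduli are powers of distinct primes, so the Chinese remainder theorem yields an infinite arithmetic progression of admissible $k$.

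Two side conditions absent from the base-$2$ setting must still be secured, and I expect these to be the main obstacle. For $\gcd(\binom{k}{r}\pm1,\,a-1)=1$: every prime $q\mid a-1$ is distinct from all $p_\ell$ (since $a\equiv1\pmod q$ forces order $1$), so I may additionally require via Theorem~\ref{thm:lucas} that some base-$q$ digit of $k$ be strictly smaller than the corresponding nonzero digit of $r$, which makes $\binom{k}{r}\equiv0\pmod q$ and hence $q\nmid\binom{k}{r}\pm1$; these congruences remain coprime to all previous moduli. For ``$\binom{k}{r}$ is not a power of $a$'': as $\binom{k}{r}$ is a degree-$r$ polynomial in $k$, consecutive terms of the progression have ratio tending to $1$, whereas consecutive powers of $a\ge2$ differ by a factor at least $a$, so only finitely many terms can be powers of $a$ and infinitely many survive. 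Finally, choosing $k$ large enough that $\binom{k}{r}\ge\max_\ell p_\ell$ makes each value $\binom{k}{r}a^n\pm1$ a proper multiple of some $p_\ell$, hence composite, completing both parts. The only delicate point is the simultaneous bookkeeping of all the digit conditions, but since every modulus involved is a power of a distinct prime, the Chinese remainder theorem removes any conflict.
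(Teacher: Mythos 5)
Your proposal is correct, and its core is the same as the paper's proof: the paper likewise takes $p_0$ and $p_\tau$ from the hypothesis, obtains $p_1,\dotsc,p_{\tau-1}$ from Theorem~\ref{thm:bang} (the hypothesis that $a+1$ is not a power of $2$ killing the $n=2$ exception), runs the covering system \eqref{eq:covering}, forces $\binom{k}{r}\equiv1\pmod{p_\ell}$ via Lemma~\ref{lem:1}, and forces $\binom{k}{r}\equiv-1\pmod{p_\ell}$ by exactly your odd-digit device: its congruence \eqref{eq:1gen} replaces an odd base-$p_\ell$ digit $r_{\ell i_0}$ of $r$ by the digit $p_\ell-1$ in $k$, giving $\binom{k}{r}\equiv\binom{p_\ell-1}{r_{\ell i_0}}\equiv-1\pmod{p_\ell}$ by Theorem~\ref{thm:lucas}; no computational lemmas enter here, so your ``replacement'' is in fact what the paper does. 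The only divergence is in the two side conditions. For ``$\binom{k}{r}$ is not a power of $a$,'' the paper stays inside the Chinese remainder theorem: it adjoins a prime factor $p_{\sigma+1}$ of $a$ and requires $k\equiv r\pmod{p_{\sigma+1}^{j_{\sigma+1}+1}}$, so that $\binom{k}{r}\equiv1\pmod{p_{\sigma+1}}$ while every positive power of $a$ is $\equiv0$; you argue instead by growth along the arithmetic progression, which also works --- your ratio argument shows that eventually no two consecutive terms of the progression are both powers of $a$, hence infinitely many terms are not powers of $a$, which is all you need (though it does not justify your stronger claim that only finitely many terms are powers of $a$). For $\gcd\left(\binom{k}{r}\pm1,a-1\right)=1$, the paper imposes $k\equiv0\pmod{p_\ell^{j_\ell}}$ for each prime $p_\ell\mid a-1$ in \eqref{eq:3gen}; your version --- forcing a base-$q$ digit of $k$ to lie below a \emph{nonzero} digit of $r$ --- is actually the more careful one, since the paper's congruence as written need not yield $\binom{k}{r}\equiv0\pmod{p_\ell}$ when all base-$p_\ell$ digits of $r$ in positions $0,\dotsc,j_\ell-1$ vanish (e.g.\ $a=4$, $r=3$, $p_\ell=3$: $k=21$ satisfies $k\equiv0\pmod{3}$ yet $\binom{21}{3}=1330\equiv1\pmod{3}$); that slip is repaired by raising the modulus to $p_\ell^{j_\ell+1}$, which is in effect what your digit condition accomplishes.
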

\begin{proof}
For each $\ell\in[1,\tau]$, let $p_\ell$ be a prime such that 
\begin{center}
$p_\ell\mid a^{2^\ell}-1$ and $p_\ell\nmid a^{2^{\widetilde{\ell}}}-1$ for any $\widetilde{\ell}\in[0,\ell-1]$,
\end{center}
so we also have $a^{2^{\ell-1}}\equiv -1\pmod{p_\ell}$.  Note that such primes exist by Theorem~\ref{thm:bang}.  Let $p_{\tau+1},p_{\tau+2},\dotsc,p_\sigma$ be all the prime factors of $a-1$. Further let $p_{\sigma+1}$ be a prime factor of $a$. Note that $p_\ell$ are all distinct for $\ell\in[0,\sigma+1]$ since $\gcd(a,a^{\widetilde{\ell}}-1)=1$ for all positive integers $\widetilde{\ell}$.  For each $\ell\in[0,\sigma+1]$, let $j_\ell$ be the smallest positive integer satisfying $r<p_\ell^{j_\ell+1}$.

Using the Chinese remainder theorem, let $k$ satisfy the following congruences:
\begin{equation}\label{eq:3gen}
\begin{aligned}
k&\equiv 0\pmod{p_\ell^{j_\ell}}\text{ for each }\ell\in[\tau+1,\sigma]\text{ and}\\
k&\equiv r\pmod{p_{\sigma+1}^{j_{\sigma+1}+1}}.
\end{aligned}
\end{equation}
It follows from Theorem~\ref{thm:lucas} that $\binom{k}{r}\equiv 0\pmod{p_\ell}$ for each $\ell\in[\tau+1,\sigma]$ and $\binom{k}{r}\equiv 1\pmod{p_{\sigma+1}}$.  Consequently, $\gcd\left(\binom{k}{r}-1,a-1\right)=\gcd\left(\binom{k}{r}+1,a-1\right)=1$ and $\binom{k}{r}$ is not a power of $a$.  

For each $\ell\in[0,\tau]$, if 
\begin{equation}\label{eq:2gen}
k\equiv r\pmod{p_\ell^{j_\ell+1}},
\end{equation}
then $\binom{k}{r}\equiv 1\pmod{p_\ell}$ by Lemma~\ref{lem:1}.  Let $\sum_{i=0}^{j_{\ell}}r_{\ell i}p_\ell^i$ be the base $p_\ell$ representation of $r$.  Since $r$ is an odd integer, there exists an $i_0\in[0,j_\ell]$ such that $r_{\ell i_0}$ is odd.  By Theorem~\ref{thm:lucas}, if 
\begin{equation}\label{eq:1gen}
k\equiv r+(p_\ell-1-r_{\ell i_0})p_\ell^{i_0}\pmod{p_\ell^{j_\ell+1}},
\end{equation}
then $\binom{k}{r}\equiv\binom{p_\ell-1}{r_{\ell i_0}}\equiv -1\pmod{p_\ell}$.

Consider the covering system \eqref{eq:covering}.  If $n\equiv 2^{\ell-1}\pmod{2^\ell}$ for some $\ell\in[1,\tau]$, then $a^n\equiv-1\pmod{p_\ell}$, and if $n\equiv 0\pmod{p_0}$, then $a^n\equiv 1\pmod{p_0}$.  Thus, using the Chinese remainder theorem to choose $k$ so that
\begin{itemize}
\item $\binom{k}{r}\geq\max\{p_0,p_1,\ldots,p_\tau\}$;
\item $k$ satisfies \eqref{eq:2gen} for each $\ell\in[1,\tau]$; and
\item $k$ satisfies \eqref{eq:1gen} when $\ell=0$,
\end{itemize}
we ensure that for any natural number $n$, $\binom{k}{r}a^n+1$ is composite and divisible by $p_\ell$ for some $\ell\in[0,\tau]$.  Similarly, using the Chinese remainder theorem to choose $k$ so that
\begin{itemize}
\item $\binom{k}{r}\geq\max\{p_0,p_1,\ldots,p_\tau\}$; \item $k$ satisfies \eqref{eq:2gen} when $\ell=0$; and
\item $k$ satisfies \eqref{eq:1gen} for each $\ell\in[1,\tau]$,
\end{itemize}
we ensure that for any natural number $n$, $\binom{k}{r}a^n-1$ is composite and divisible by $p_\ell$ for some $\ell\in[0,\tau]$.  Thus, the proof is finished by recalling that $k$ satisfies the congruences in \eqref{eq:3gen}.
\end{proof}

For a positive integer $x$, let $R(x)$ be the number of $r\in[1,x]$ such that $\binom{k}{r}$ is a Riesel number for infinitely many positive integers $k$.  The following theorem follows similarly to Theorem~\ref{thm:sdensity}.
\begin{theorem}
The density $R(x)/x$ gets arbitrarily close to 1 as $x$ tends to infinity.
\end{theorem}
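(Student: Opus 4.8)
The plan is to mirror the proof of Theorem~\ref{thm:sdensity}, but to feed it the Riesel construction of Theorem~\ref{thm:oddr}$(b)$ in place of the Sierpi\'{n}ski construction, relaxing the hypothesis ``$r$ is odd'' to a digit condition just as Theorem~\ref{thm:sdensity} relaxed Corollary~\ref{cor:corollary}. First I would specialize Theorem~\ref{thm:oddr} to $a=2$ with $\tau=6$, which is the smallest admissible value since $2^{2^6}-1=2^{64}-1$ is the first case in which the relevant Fermat number $2^{32}+1=641\cdot 6700417$ supplies two distinct primitive primes. The covering system \eqref{eq:covering} with $\tau=6$ then forces, for each residue $2^{\ell-1}\pmod{2^\ell}$ with $\ell\in[1,6]$, a prime $p_\ell$ with $2^{2^{\ell-1}}\equiv-1\pmod{p_\ell}$, namely $p_1,\dots,p_6=3,5,17,257,65537,641$, together with a prime $p_0=6700417$ handling the residue $0\pmod{2^6}$, for which $2^n\equiv1\pmod{p_0}$. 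For the Riesel expression $\binom{k}{r}\cdot 2^n-1$ the signs swap relative to the Sierpi\'{n}ski case: I now need $\binom{k}{r}\equiv-1\pmod{p_\ell}$ for every $\ell\in[1,6]$ and $\binom{k}{r}\equiv1\pmod{p_0}$.

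Next I would isolate a digit criterion making these congruences simultaneously achievable, in the same spirit that condition~$(i)$ of Theorem~\ref{thm:sierpinski} isolated a single base-$641$ condition. The congruence $\binom{k}{r}\equiv1\pmod{p_0}$ is free for every $r$ by Lemma~\ref{lem:1} (take $k\equiv r\pmod{p_0^{j_0+1}}$). For each $\ell\in[1,6]$, if the base-$p_\ell$ representation of $r$ has an odd digit $s=r_{\ell i_0}$, then the choice of $k$ in \eqref{eq:1gen}, which sets the $i_0$-th digit of $k$ to $p_\ell-1$ and matches the remaining digits, yields $\binom{k}{r}\equiv\binom{p_\ell-1}{s}\equiv-1\pmod{p_\ell}$ by Theorem~\ref{thm:lucas}. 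By the Chinese remainder theorem these congruences, together with $k\equiv r\pmod{2^{j+1}}$ (which keeps $\binom{k}{r}$ odd via Lemma~\ref{lem:1}), are compatible; choosing $k$ with $\binom{k}{r}\geq\max_{\ell}p_\ell$ then forces $\binom{k}{r}\cdot 2^n-1$ to be composite for every $n$. Hence any $r$ possessing an odd digit in each of the bases $3,5,17,257,65537,641$ yields infinitely many Riesel numbers $\binom{k}{r}$.

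Finally I would run the Theorem~\ref{thm:sdensity} counting argument on the complementary set. Let $\mathcal{B}_p(x)$ denote the set of $r\in[1,x]$ all of whose base-$p$ digits are even; then every $r\in[1,x]$ failing the criterion lies in $\bigcup_{p\in\{3,5,17,257,65537,641\}}\mathcal{B}_p(x)$. For a fixed odd prime $p$ there are exactly $(p+1)/2$ even residues in $[0,p-1]$, so at most $\bigl((p+1)/2\bigr)^{m}$ of the integers below $p^{m}$ have every digit even, giving a density bound $\bigl((p+1)/(2p)\bigr)^{m}\to 0$ as $m\to\infty$, precisely analogous to the bound $\bigl((p-395)^{j+1}-1\bigr)/\bigl(p^{j+1}-1\bigr)$ used for $\mathcal{G}$ in Theorem~\ref{thm:sdensity}. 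Since a finite union of density-zero sets has density zero and the good set is contained in $\{r:\binom{k}{r}\text{ is Riesel for infinitely many }k\}$, we conclude $R(x)/x\to 1$.

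The essential departure from Theorem~\ref{thm:sdensity}, and the main thing to get right, is that the Riesel covering forces the hard congruence $\binom{k}{r}\equiv-1$ modulo \emph{six} primes at once rather than modulo the single prime $641$, so the argument rests on confirming that the bad set $\mathcal{B}_p$ for each of the six bases individually has density zero and that their union does as well. The only delicate feature is the large base $p=65537$: an even $r<65537$ is a single even digit and is therefore unavoidably bad, so convergence to $1$ is slow and genuinely asymptotic, requiring $r$ large enough to carry many base-$65537$ digits; nevertheless the estimate $\bigl((p+1)/(2p)\bigr)^{m}\to 0$ is all that the limiting statement demands.
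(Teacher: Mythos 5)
Your proposal is correct and takes essentially the approach the paper intends: the paper gives no separate argument but states the result ``follows similarly to Theorem~\ref{thm:sdensity},'' meaning one combines the Riesel construction from the proof of Theorem~\ref{thm:oddr}$(b)$ with $a=2$, $\tau=6$ (whose use of ``$r$ odd'' really only requires an odd digit of $r$ in base $p_\ell$ for each $\ell\in[1,\tau]$, via $\binom{p_\ell-1}{s}\equiv(-1)^s\pmod{p_\ell}$) with the same digit-counting density estimate. You correctly identify the one substantive asymmetry with the Sierpi\'{n}ski case --- that the hard congruence $\binom{k}{r}\equiv-1$ must now be forced simultaneously modulo the six primes covering the nonzero residue classes, so the bad set is a union of six all-even-digit sets, each of density zero --- which is precisely what the paper's implied proof rests on.
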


In 2001, Chen introduced the concept of a $(2,1)$-primitive $m$-covering \cite{chen2}.  This concept was extended to the following definition by Harrington in 2015 \cite{harrington}.
\begin{definition}\label{def:ab}
A covering system $\mathcal{C}=\{q_\ell\pmod{m_\ell}\}_{\ell=1}^\tau$ is called an \emph{$(a,b)$-primitive $m$-covering} if every integer satisfies at least $m$ congruences of $\mathcal{C}$ and there exist distinct primes $p_1,p_2,\ldots,p_\tau$ such that for each $\ell\in[1,\tau]$,
\begin{center}
$p_\ell\mid a^{m_\ell}-b^{m_\ell}$ and $p_\ell\nmid a^{\widetilde{\ell}}-b^{\widetilde{\ell}}$ for any $\widetilde{\ell}<m_\ell$.
\end{center}
Furthermore, a covering system $\mathcal{C}$ is called an \emph{$(a,b)$-primitive disjoint $m$-covering} if $\mathcal{C}$ is an $(a,b)$-primitive $m$-covering that can be partitioned into $m$ disjoint $(a,b)$-primitive $1$-covering systems.
\end{definition}

Harrington showed that if $a$ and $b$ are relatively prime integers such that $a+b$ is not a power of $2$, then there exists an $(a,b)$-primitive disjoint 3-covering \cite{harrington}.  Thus, the following theorem provides immediate results when $m=3$.

\begin{theorem}\label{thm:infr}
Let $a$ be a positive integer for which there exists an $(a,1)$-primitive $m$-covering $\mathcal{C}$.  Then there exist infinitely many positive integers $r$ for which each of the following holds:
\begin{enumerate}[\indent$(a)$]
\item\label{item:a1sierprinski} there exist infinitely many positive integers $k$ such that $\gcd\left(\binom{k}{r}+1,a-1\right)=1$, $\binom{k}{r}$ is not a power of $a$, and $\binom{k}{r}\cdot a^n+1$ has at least $m$ distinct prime divisors for all natural numbers $n$;
\item\label{item:a1riesel} there exist infinitely many positive integers $k$ such that $\gcd\left(\binom{k}{r}-1,a-1\right)=1$, $\binom{k}{r}$ is not a power of $a$, and $\binom{k}{r}\cdot a^n-1$ has at least $m$ distinct prime divisors for all natural numbers $n$; and
\item\label{item:a1sierprinskiriesel} if $\mathcal{C}$ is an $(a,1)$-primitive disjoint $m$-covering, then there exist infinitely many positive integers $k$ such that $\gcd\left(\binom{k}{r}+1,a-1\right)=\gcd\left(\binom{k}{r}-1,a-1\right)=1$, $\binom{k}{r}$ is not a power of $a$, $\binom{k}{r}\cdot a^n+1$ and $\binom{k}{r}\cdot a^n-1$ are composite, and each of $\binom{k}{r}\cdot a^n+1$ and $\binom{k}{r}\cdot a^n-1$ has at least $\lfloor m/2\rfloor$ distinct prime divisors for all natural numbers $n$.
\end{enumerate}
\end{theorem}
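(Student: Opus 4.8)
The plan is to run the same machinery as in the proof of Theorem~\ref{thm:oddr}, replacing the dyadic covering \eqref{eq:covering} by the given $(a,1)$-primitive $m$-covering $\mathcal{C}=\{q_\ell\pmod{m_\ell}\}_{\ell=1}^\tau$ together with its associated distinct primes $p_1,\dots,p_\tau$. Primitivity forces $\ord_{p_\ell}(a)=m_\ell$, so whenever $n\equiv q_\ell\pmod{m_\ell}$ we have $a^n\equiv a^{q_\ell}\pmod{p_\ell}$. For part~(\ref{item:a1sierprinski}) I would aim to force $\binom{k}{r}\equiv -a^{-q_\ell}\pmod{p_\ell}$ for every $\ell$, so that $\binom{k}{r}a^n+1\equiv -a^{-q_\ell}a^{q_\ell}+1\equiv 0\pmod{p_\ell}$ for each of the (at least $m$) indices whose congruence $n$ satisfies; as the $p_\ell$ are distinct, this exhibits at least $m$ distinct prime divisors of $\binom{k}{r}a^n+1$. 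For part~(\ref{item:a1riesel}) the only change is to demand $\binom{k}{r}\equiv a^{-q_\ell}\pmod{p_\ell}$, which makes $\binom{k}{r}a^n-1\equiv 0\pmod{p_\ell}$.

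To realise these residues while keeping $r$ free to range over an infinite set, I would use Lucas' Theorem exactly as in Lemma~\ref{lem:-1}. Restrict attention to those $r$ whose base-$p_\ell$ expansion contains a digit equal to $1$ for every $\ell\in[1,\tau]$; imposing $r\equiv 1\pmod{p_\ell}$ for each $\ell$ (compatible by the Chinese remainder theorem, since the $p_\ell$ are distinct) pins down $r$ modulo $\prod_\ell p_\ell$ and leaves a whole arithmetic progression of admissible $r$. For such an $r$, fix a position $i_\ell$ with $p_\ell$-digit equal to $1$ and build $k$ digit-by-digit in base $p_\ell$: copy the digits of $r$ except at position $i_\ell$, where the digit becomes the unique $t_\ell\in[1,p_\ell-1]$ with $\binom{t_\ell}{1}=t_\ell$ equal to the required target. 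Since $\binom{t_\ell}{1}=t_\ell$ attains any nonzero residue and the remaining Lucas factors are $\binom{r_i}{r_i}=1$, this yields $\binom{k}{r}\equiv t_\ell\pmod{p_\ell}$, amounting to a single congruence $k\equiv r+(t_\ell-1)p_\ell^{i_\ell}\pmod{p_\ell^{\,j_\ell+1}}$ with $p_\ell^{\,j_\ell+1}>r$. The coprimality and non-power requirements are then handled verbatim as in Theorem~\ref{thm:oddr}: adjoin the prime factors of $a-1$ (forcing $\binom{k}{r}\equiv 0$ via vanishing low $p$-digits of $k$) and a prime factor of $a$ (forcing $\binom{k}{r}\equiv 1$ through Lemma~\ref{lem:1}), all chosen distinct from the $p_\ell$, and combine every congruence by the Chinese remainder theorem; taking $\binom{k}{r}$ large secures compositeness and supplies infinitely many $k$ for each admissible $r$.

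For part~(\ref{item:a1sierprinskiriesel}) I would partition the disjoint $m$-covering into sub-coverings $\mathcal{C}_1,\dots,\mathcal{C}_m$, each covering $\Z$, assign $\lfloor m/2\rfloor$ of them to the $+1$ side and $\lfloor m/2\rfloor$ to the $-1$ side, and give each prime the target residue $-a^{-q_\ell}$ or $a^{-q_\ell}$ according to the side of its sub-covering. Because the sub-coverings are disjoint the primes are distinct, so no prime is asked to meet two incompatible residues, and each side inherits at least $\lfloor m/2\rfloor$ distinct prime divisors for every $n$. The main obstacle I anticipate is making all of these simultaneous congruences on $k$ consistent while reconciling them with the coprimality conditions: a covering prime $p_\ell$ with $m_\ell=1$ divides $a-1$, and using it forces $\binom{k}{r}\equiv -1\pmod{p_\ell}$ on the $+1$ side or $\binom{k}{r}\equiv 1\pmod{p_\ell}$ on the $-1$ side, which is precisely what $\gcd\bigl(\binom{k}{r}\pm 1,a-1\bigr)=1$ forbids. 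Arranging the construction so that the primes actually used for divisibility are disjoint from the factors of $a-1$, so that the count of $m$ (respectively $\lfloor m/2\rfloor$) distinct divisors survives, is where the argument needs the most care; everything else reduces to routine applications of Lucas' Theorem and the Chinese remainder theorem.
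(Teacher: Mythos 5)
Your proposal is correct and follows essentially the same route as the paper: force $\binom{k}{r}\equiv -a^{-q_\ell}$ (resp.\ $a^{-q_\ell}$) modulo the covering primes via Lucas' Theorem and the Chinese remainder theorem, adjoin the prime factors of $a-1$ and a prime factor of $a$ for the gcd and non-power conditions, and split the disjoint covering into two halves for part (c); your per-prime digit trick (a digit $1$ in $r$ replaced by $t_\ell$ in $k$, so the Lucas factor is $\binom{t_\ell}{1}=t_\ell$) is the same mechanism the paper packages into a single integer $R$ satisfying all the target residues, together with $r\equiv 1\pmod{p_\ell^{J_1+1}}$ and $k\equiv r+R-1\pmod{p_\ell^{J_2+1}}$. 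Two remarks. First, the obstacle you single out in part (c) dissolves immediately: by Definition~\ref{def:ab}, each covering prime satisfies $p_\ell\nmid a^{\widetilde{\ell}}-1$ for all $\widetilde{\ell}<m_\ell$, so taking $\widetilde{\ell}=1$ shows that no covering prime divides $a-1$ (and a modulus-one congruence cannot occur in a primitive covering, since every prime divides $a^0-1=0$); this is exactly the paper's one-line remark that $p_1,\dotsc,p_{\sigma+1}$ are pairwise distinct, and it is why the $m$ (resp.\ $\lfloor m/2\rfloor$) forced prime divisors never collide with the gcd requirements. Second, when you force $\binom{k}{r}\equiv 0\pmod{q}$ for $q\mid a-1$ by zeroing the low base-$q$ digits of $k$, Lucas only yields $0$ if $r$ has a nonzero digit in one of those positions; since you are already choosing $r$ by the Chinese remainder theorem, simply add $r\equiv 1\pmod{q}$ to its defining congruences (the paper does this implicitly by demanding $r\equiv 1\pmod{p_\ell^{J_1+1}}$ for \emph{all} $\ell\in[1,\sigma+1]$, not just the covering primes). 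Incidentally, your signs are the ones that match the statement: the paper's proof of part (a) takes $R\equiv a^{-q_\ell}\pmod{p_\ell}$ and then verifies $\binom{k}{r}\cdot a^n-1\equiv 0\pmod{p_{\ell_\iota}}$, so the signs in its parts (a) and (b) are swapped---a typo your version avoids.
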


\begin{proof}
Let $\mathcal{C}=\{q_\ell\pmod{m_\ell}\}_{\ell=1}^\tau$ be an $(a,1)$-primitive $m$ covering with distinct primes $p_1,p_2,\dotsc,p_\tau$ given by Definition~\ref{def:ab}. Let $p_{\tau+1},p_{\tau+2},\dotsc,p_\sigma$ be all the prime factors of $a-1$. Further let $p_{\sigma+1}$ be a prime factor of $a$. Note that $p_\ell$ are all distinct for $\ell\in[1,\sigma+1]$ due to Definition~\ref{def:ab} and that $\gcd(a,a^{\widetilde{\ell}}-1)=1$ for all positive integers $\widetilde{\ell}$.

$(\ref{item:a1sierprinski})$ By the Chinese remainder theorem, there exists a positive integer $R$ such that \begin{equation}\label{eq:a1sierprinskiR}
R\equiv\begin{cases}
a^{-q_\ell}\pmod{p_\ell}&\text{for all }\ell\in[1,\tau];\\
0\pmod{p_\ell}&\text{for all }\ell\in[\tau+1,\sigma];\\
1\pmod{p_{\sigma+1}}.
\end{cases}
\end{equation}
Let $J_1$ be the smallest nonnegative integer such that $R<p_\ell^{J_1+1}$ for all $\ell\in[1,\sigma+1]$. Again by the Chinese remainder theorem, there exist infinitely many positive integers $r>R$ such that $r\mod{1}{p_\ell^{J_1+1}}$ for all $\ell\in[1,\sigma+1]$. For each such $r$, let $J_2$ be the smallest nonnegative integer such that $r<p_\ell^{J_2+1}$ for all $\ell\in[1,\sigma+1]$. Once again by the Chinese remainder theorem, there exist infinitely many positive integers $k>r$ such that $k\mod{r+R-1}{p_\ell^{J_2+1}}$ for all $\ell\in[1,\sigma+1]$. For each such $k$, let $J_3$ be the smallest nonnegative integer such that $k<p_\ell^{J_3+1}$ for all $\ell\in[1,\sigma+1]$. For each $\ell\in[1,\sigma+1]$, let the base $p_\ell$ representations of $R$, $r$, and $k$ be $R=\sum_{i=0}^{J_1}R_{\ell i}p_\ell^i$, $r=1+\sum_{i=J_1+1}^{J_2}r_{\ell i}p_\ell^i$, and $k=\sum_{i=0}^{J_1}R_{\ell i}p_\ell^i+\sum_{i=J_1+1}^{J_2}r_{\ell i}p_\ell^i+\sum_{i=J_2+1}^{J_3}k_{\ell i}p_\ell^i$, respectively. By Theorem~\ref{thm:lucas},
$$\binom{k}{r}\equiv\binom{R_{\ell0}}{1}\left(\prod_{i=1}^{J_1}\binom{R_{\ell i}}{0}\right)\left(\prod_{i=J_1+1}^{J_2}\binom{r_{\ell i}}{r_{\ell i}}\right)\left(\prod_{i=J_2+1}^{J_3}\binom{k_{\ell i}}{0}\right)\equiv R_{\ell0}\mod{R}{p_\ell}.$$
Therefore, $\gcd\left(\binom{k}{r}+1,a-1\right)=1$ since $\binom{k}{r}+1\mod{1}{p_\ell}$ for all $\ell\in[\tau+1,\sigma]$, and $\binom{k}{r}$ is not a power of $a$ since $\binom{k}{r}\mod{1}{p_{\sigma+1}}$. Lastly, since $\mathcal{C}$ is an $(a,1)$-primitive $m$ covering, for each natural number $n$, there exist distinct $\ell_1,\ell_2,\dotsc,\ell_m\in[1,\tau]$ such that $n\mod{q_{\ell_\iota}}{m_{\ell_\iota}}$ for all $\iota\in[1,m]$. Thus, for each $\iota\in[1,m]$,
$$\binom{k}{r}\cdot a^n-1\equiv R\left((a^{m_{\ell_\iota}})^ta^{q_{\ell_\iota}}\right)-1\equiv a^{-q_{\ell_\iota}}a^{q_{\ell_\iota}}-1\mod{0}{p_{\ell_\iota}}$$
for some nonnegative integer $t$.

$(\ref{item:a1riesel})$ This proof resembles the proof of part $(\ref{item:a1sierprinski})$ after replacing \eqref{eq:a1sierprinskiR} by
$$R\equiv\begin{cases}
-a^{-q_\ell}\pmod{p_\ell}&\text{for all }\ell\in[1,\tau];\\
0\pmod{p_\ell}&\text{for all }\ell\in[\tau+1,\sigma];\\
1\pmod{p_{\sigma+1}}.
\end{cases}$$

$(\ref{item:a1sierprinskiriesel})$ Let $\mathcal{C}$ be partitioned into $\mathcal{C}_1,\mathcal{C}_2,\dotsc,\mathcal{C}_m$, where $\mathcal{C}_\lambda=\{q_{\lambda\ell}\pmod{m_{\lambda\ell}}\}_{\ell=1}^{\tau_\lambda}$ for each $\lambda\in[1,m]$, and $\tau_1+\tau_2+\dotsb+\tau_\lambda=\tau$. Let $\{p_{\lambda1},p_{\lambda2},\dotsc,p_{\lambda\tau_\lambda}:\lambda\in[1,m]\}$ be given by Definition~\ref{def:ab}. A similar proof as from part $(\ref{item:a1sierprinski})$ applies after replacing \eqref{eq:a1sierprinskiR} by
$$R\equiv\begin{cases}
a^{-q_{\lambda\ell}}\pmod{p_{\lambda\ell}}&\text{for all }\ell\in[1,\tau_\lambda],\text{ where }\lambda\in[1,\lfloor m/2\rfloor];\\
-a^{-q_{\lambda\ell}}\pmod{p_{\lambda\ell}}&\text{for all }\ell\in[1,\tau_\lambda],\text{ where }\lambda\in[\lceil m/2\rceil+1,m];\\
0\pmod{p_\ell}&\text{for all }\ell\in[\tau+1,\sigma];\\
1\pmod{p_{\sigma+1}}.
\end{cases}$$
\end{proof}

\section{Concluding remarks}
Theorem~\ref{thm:oddr} shows that for any integer $a\geq 2$ and any odd positive integer $r$, there are infinitely many $a$-Sierpi\'{n}ski numbers and infinitely many $a$-Riesel numbers of the form $\binom{k}{r}$.  Theorems~\ref{thm:sierpinski} and \ref{thm:sierpinski10primes} show that there are infinitely many Sierpi\'{n}ski numbers of the form $\binom{k}{r}$ for most even positive integers $r$; however, it is unknown if there are Sierpi\'{n}ski numbers of the form $\binom{k}{r}$ for an arbitrary even positive integer $r$.  Thus, we present the following conjecture.

\begin{conjecture}
For any positive integer $r$, there exist infinitely many positive integers $k$ for which $\binom{k}{r}$ is simultaneously a Sierpi\'{n}ski number and a Riesel number.
\end{conjecture}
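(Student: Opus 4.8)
The plan is to run the covering-system argument of Theorems~\ref{thm:oddr} and~\ref{thm:infr} in the base $a=2$, but forcing the Sierpi\'{n}ski and Riesel conditions on a single fixed $r$ at the same time. Since $2+1=3$ is not a power of $2$, Harrington's theorem furnishes a $(2,1)$-primitive disjoint $3$-covering $\mathcal{C}=\mathcal{C}_1\sqcup\mathcal{C}_2\sqcup\mathcal{C}_3$ with pairwise distinct associated primes. I would devote one sub-covering to certifying a prime factor of $\binom{k}{r}\cdot 2^n+1$ and a second to $\binom{k}{r}\cdot 2^n-1$ (the third boosting the count of prime divisors or left unused), so that every integer $n$ is handled on both sides simultaneously. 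For a prime $p_\ell$ attached to a congruence $n\equiv q_\ell\pmod{m_\ell}$ one has $2^n\equiv 2^{q_\ell}\pmod{p_\ell}$, and by Theorem~\ref{thm:lucas} the residue of $\binom{k}{r}$ modulo $p_\ell$ equals the product $\prod_i\binom{k_i}{r_i}$ of digit-binomials in base $p_\ell$; thus each covering congruence reduces to prescribing a target residue for $\binom{k}{r}\pmod{p_\ell}$ that I would try to realize by choosing the base $p_\ell$ digits of $k$.

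Arranging, as in \eqref{eq:covering}, that every prime sees $2^{q_\ell}\equiv\pm1$, each covering congruence then demands either $\binom{k}{r}\equiv 1$ or $\binom{k}{r}\equiv -1\pmod{p_\ell}$. The congruences requiring $+1$ are met directly by Lemma~\ref{lem:1}, so they impose only harmless constraints on $k$. The congruences requiring $-1$ are handled exactly as in Lemma~\ref{lem:-1}: I would fix one base-$p_\ell$ digit $k_{i_0}$ so that $\binom{k_{i_0}}{r_{i_0}}\equiv -1\pmod{p_\ell}$, or split the $-1$ across two digits as in Lemma~\ref{lem:pair}. When $r$ is odd this always succeeds, since some base-$p_\ell$ digit $r_{i_0}$ is odd and then $\binom{p_\ell-1}{r_{i_0}}\equiv -1\pmod{p_\ell}$, which is precisely the mechanism behind \eqref{eq:1gen}. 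With every $-1$ constraint satisfied by fixing finitely many digits, the remaining digits stay free, and the Chinese remainder theorem produces infinitely many admissible $k$; imposing in addition $k\equiv r\pmod{2^{j+1}}$ keeps $\binom{k}{r}$ odd, hence not a power of $2$, and choosing $\binom{k}{r}\geq\max_\ell p_\ell$ makes each forced prime factor proper, so that $\binom{k}{r}\cdot 2^n\pm1$ is genuinely composite.

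The main obstacle is the $-1$ step for an arbitrary \emph{even} $r$, and the simultaneity makes it harder than in either one-sided result. On the Riesel side the covering \eqref{eq:covering} forces $\binom{k}{r}\equiv -1$ at \emph{each} of the primes $p_1,\dots,p_\tau$, so one needs, for all of these primes at once, a base-$p_\ell$ digit $r_{i_0}$ of the given $r$ for which $\binom{x}{r_{i_0}}\equiv -1\pmod{p_\ell}$ is solvable---the $\mathcal{G}$-membership condition of Lemma~\ref{lem:G}, now demanded across many primes rather than for the single prime $p=641$. For even $r$ there is no analogue of the odd-digit identity guaranteeing this, and the tools that presently remove even cases are not uniform: Theorem~\ref{thm:sdensity} only shows that the bad set has density $0$ in base $641$, while Lemma~\ref{lem:10primes} disposes of the small even $r$ through a finite computation by enlarging the prime pool. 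To settle the conjecture one would instead need a \emph{structural} theorem asserting that for every $r$ there exist enough Fermat-type primes $p$, each carrying a twin factor as in Lemma~\ref{lem:10primes}, that admit a digit solution of $\binom{x}{r_i}\equiv -1\pmod{p}$, and that these primes can be allocated compatibly between the Sierpi\'{n}ski and Riesel blocks; proving such solvability for all even $r$ is the essential gap that keeps the statement a conjecture.
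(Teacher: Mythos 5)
This statement is a \emph{conjecture} in the paper: the authors offer no proof, and their concluding remarks explicitly flag the arbitrary even $r$ case as open. Your write-up is therefore the right response --- you do not claim a proof, and the obstruction you isolate is exactly the one that keeps the statement conjectural. Two confirmations of your analysis: first, simultaneity really does force disjoint prime sets, since with a single covering \eqref{eq:covering} the same prime $p_\ell$ would have to see both $\binom{k}{r}\equiv 1$ and $\binom{k}{r}\equiv -1$, which is why Theorem~\ref{thm:infr}(\ref{item:a1sierprinskiriesel}) needs a \emph{disjoint} $m$-covering; second, Theorem~\ref{thm:infr} evades the $-1$-realization problem only by \emph{choosing} $r$ (it forces $r\equiv 1$ modulo high powers of every relevant prime, so that Lucas' theorem reduces $\binom{k}{r}$ to a single digit-binomial $\binom{R_{\ell 0}}{1}=R_{\ell 0}$, a residue one can prescribe freely), and this construction cannot be run for a fixed arbitrary $r$. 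For fixed even $r$, the paper's machinery requires a digit of $r$ in base $p_\ell$ lying in the analogue of $\mathcal{G}$ for \emph{each} prime carrying a $-1$ constraint, and neither Lemma~\ref{lem:G} (single prime $641$), Lemma~\ref{lem:10primes} (finite computation for $r\leq 640$), nor Theorem~\ref{thm:sdensity} (density statement only) supplies this uniformly --- precisely the structural gap you name.
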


We end this section with the following question regarding Catalan numbers.  Recall that the $k$-th Catalan number is $\frac{1}{k+1}\binom{2k}{k}$.

\begin{question}
Are there infinitely many Catalan numbers that are either Sierpi\'{n}ski numbers or Riesel numbers?
\end{question}

The constructions in this paper rely on fixing a positive integer $r$ prior to finding $k$ values for which $\binom{k}{r}$ is either Sierpi\'{n}ski or Riesel. Hence, a new technique might be required in order to tackle the existence of Sierpi\'{n}ski or Riesel Catalan numbers.

\section{Acknowledgments}
These results are based on work supported by the National Science Foundation under grant numbered DMS-1852378.

\appendix
\section{Appendix: Mathematica code for Lemma~\ref{lem:G}}\label{appendixA}

\texttt{p = 641;\\
good =  Complement[ Table[\\
\indent\indent If[ Or @@ Table[ Mod[Binomial[k, r], p] == p - 1, \{k, p - 1\}], r],\\
\indent \{r, 0, p - 1\}], \{Null\}]}\\

The output \texttt{good} is our desired set $\mathcal{G}$.\\

\section{Appendix: Mathematica code for Lemma~\ref{lem:pair}}\label{appendixB}

The variables \texttt{p} and \texttt{good} are defined in the code given in Appendix~\ref{appendixA}.\\
\\
\texttt{bad = Complement[ Table[r, \{r, 0, p - 1\}], good];\\
badbad = \{\};\\
Do[ If[ Not[ Or @@ Flatten[\\
\indent Table[ Mod[Binomial[k1, bad[[r1]]] * Binomial[k2, bad[[r2]]], p] == p - 1,\\
\indent\indent \{k1, p - 1\}, \{k2, p - 1\}]]],\\
\indent badbad = Append[badbad, \{bad[[r1]], bad[[r2]]\}]],\\
\indent \{r1, Length[bad]\}, \{r2, Length[bad]\}];\\
Or @@ Table[ 1 <= badbad[[i, 1]] <= 515 \&\& 1 <= badbad[[i, 2]] <= 515,\\
\indent \{i, Length[badbad]\}]}\\

The variable \texttt{badbad} contains all ordered pairs of $(r',r'')\in[0,640]^2$ that fail to satisfy our desired equation. If we want to further investigate by using \texttt{Length[badbad]}, the number of ordered pairs of $(r',r'')\in[0,640]^2$ that fail to satisfy our desired equation is $3771$. However, the final output is \texttt{False}, showing that there are no unordered pairs $\{r',r''\}\subseteq[1,515]$ that fails to satisfy our desired equation.\\

\section{Appendix: Mathematica code for Lemma~\ref{lem:10primes}}\label{appendixC}

\texttt{plist = \{641, 114689, 274177, 319489, 974849, 2424833, 6700417,  13631489,\\
\indent 26017793, 45592577, 63766529\};\\
And @@ Table[Or @@ Table[\\
\indent Solve[Product[k - j, \{j, 0, r - 1\}]/r! == p - 1, k, Modulus -> p] != \{\},\\
\indent\{p, plist\}], \{r, 640\}]}\\

The output is \texttt{True}, showing that every $r\in[1,640]$ satisfies our desired equation.

\end{document}